\newtheorem{proposition}{Proposition}
\newtheorem{lemma}{Lemma}
\newtheorem{corollary}{Corollary}
\newtheorem{definition}{Definition}
\newtheorem{remark}{Remark}
\newtheorem{example}{Example}
\begin{document}

\title{Side conditions for ordinary\\
differential equations}

\author{G. Cicogna\\
      Dipartimento di Fisica, Universit\`a di Pisa\\
        and INFN, Sezione di Pisa\\
       Largo B.~Pontecorvo 3\\
     I-56127 Pisa, Italy\\
       \\
G. Gaeta\footnote{Research partially supported by MIUR-PRIN program under project 2010--JJ4KPA}\\
 Dipartimento di Matematica, Universit\`a degli Studi di Milano\\
via Saldini 50\\
I-20133 Milano, Italy\\
             \\
S. Walcher\\
Lehrstuhl A f\"ur Mathematik, RWTH Aachen\\
D-52056 Aachen, Germany
}


\maketitle

\begin{abstract}
We specialize Olver's and Rosenau's side condition heuristics for the determination of particular invariant sets of ordinary differential equations. It turns out that side conditions of so-called LaSalle type are of special interest. Moreover we put side condition properties of symmetric and partially symmetric equations in a wider context. In the final section we present an application to parameter-dependent systems, in particular to quasi-steady state for chemical reactions.\\
MSC (2010): 34A05, 34C14, 34C45, 92C45.\\
Key words: invariant set, Lie series, infinitesimal symmetry, quasi-steady state (QSS).
\end{abstract}

\section{Introduction and overview}

Systems of ordinary or partial differential equations which admit a local Lie transformation group of symmetries (equivalently, infinitesimal symmetries) have been the object of intense research activity in the past decades. As representatives for numerous monographs on this subject we mention only the classical works by Bluman and Cole \cite{BluCo}, and Olver \cite{Olv}.
Symmetric systems admit two characteristic features. First, one may reduce the system
(locally, near points of maximal orbit dimension) to an orbit space with respect to the group action; this is realized via group invariants. 
Second, from symmetries one obtains special invariant sets, such as group-invariant solutions (also known as relative equilibria in the ODE case).
\\
However, it has been noticed that the feature of reducibility also holds for systems that are not necessarily symmetric. Olver and Rosenau \cite{OlRo2} discussed this phenomenon in detail for systems of partial differential equations. For ordinary differential equations, a reduction method which is based on a generalization of lambda symmetries (Muriel and Romero \cite{MuRo}) was introduced and analyzed in \cite{CGW1, CGW2} from different perspectives.\\
Likewise, it has been observed that a differential equation, even if not admitting a given group as symmetry group, may very well have particular solutions which are invariant with respect to such a group. Among a large number of relevant contributions, we mention the notion of {\em conditional symmetry} due to Bluman and Cole \cite{BluCo} (see also  Fushchich and Tsyfra \cite{FuTs}, and Levi and Winternitz \cite{LW1, LW2} who outlined an algorithmic approach), and the related notion of {\em weak symmetry} introduced by Pucci and Saccomandi \cite{PS2}. These in turn were generalized to the notion of {\em partial Lie point symmetry} in \cite{CGPart}.
Olver and Rosenau \cite{OlRo1} set all these observations in a general framework (mostly for partial differential equations) by noting that the additional conditions which determine possible particular solutions need not originate from a group action, and that such {\em side conditions} may a priori be chosen quite freely. \\
Herein lies the motivation for the present note: We will discuss the side condition approach as set down in \cite{OlRo1} for the class of ordinary differential equations; moreover we will identify some settings and present some applications for which this heuristics seems promising.\\

The general framework is as follows. Let a  first order autonomous ODE
\begin{equation}\label{ode} dx/dt\ = \ f(x) \end{equation} 
be given on an open subset $U$ of
${\mathbb K}^n$, with $\mathbb K$ standing for $\mathbb R$ or $\mathbb C$. 
All functions and vector fields are required to be smooth.  The vector field associated to \eqref{ode} will be denoted by $X_f$ (thus $X_f(\psi)(x)=D\psi(x)f(x)$ for any scalar-valued function), and the local flow of \eqref{ode}, i.e. the solution of the initial value problem for $y$ at $t=0$, will be called $F(t,\,y)$.
For some results  we will require analyticity of vector fields and functions, and even further restrict to the local analytic case. In addition we will discuss some special results for polynomial functions and vector fields. Non-autonomous equations are, as usual, identified with orbit classes of autonomous systems (see e.g. the remarks in \cite{CGW1}).
Restricting attention to open subsets of $\mathbb K^n$ (rather than
manifolds) imposes no loss of generality for local considerations.\\

The plan of the paper is as follows. We first introduce and discuss the pertinent notions (in particular the notion of {\em algebraic side condition}), derive necessary and sufficient criteria for admissibility of side conditions, and give examples to illustrate the concepts. As should be expected, these are rather straightforward and transparent for ordinary differential equations. The criteria are useful in trial-and-error approaches, but they do not lead to an algorithm for determining nontrivial side conditions. Thus further a priori restrictions or specializations, possibly motivated by properties of a model underlying the equation, are necessary to allow for stronger results.\\
We discuss two such restrictions. The first is motivated by a theorem of LaSalle on limit sets of dynamical systems, therefore we will speak of {\em LaSalle type} side conditions; it is possible to characterize the invariant sets these define in a more concise manner. Second, we review and generalize the classical side conditions induced by local transformation groups (not necessarily symmetry groups of \eqref{ode}) and also include partial local symmetries. As an application we discuss side conditions for two-dimensional systems.\\
 In the final section we consider side conditions for parameter-dependent systems. There are some practically relevant settings where desirable properties of a differential equation (motivated e.g. by experiments or intuition regarding the underlying  physical system) naturally lead to side conditions. A case in point is quasi-steady state (QSS) for chemical reaction equations. Our discussion of some well-known equations illustrates that the side condition approach provides a systematic and transparent way to identify appropriate ``small parameters" for QSS.
\section{Side conditions}
\subsection{Basics}
For starters we recall an invariance criterion; see e.g. \cite{GSW}, Lemma 3.1:
\begin{remark} \label{invcrit}{\em The common zero set of smooth functions $\psi_1,\ldots,\psi_r$ on $U$ is invariant for \eqref{ode} if there exist smooth functions $\nu_{jk}$ on $U$ such that
\[
X_f(\psi_j)=\sum_k\nu_{jk}\psi_k,\quad 1\leq j\leq r.
\]
\rightline {$\diamond$}}
\end{remark}
The basic specialization of the side condition approach to ordinary differential equations is as follows.
\begin{definition}\label{algsidecon} Let smooth functions
$\gamma_1 , \, \ldots \, , \gamma_s : \, U \to \mathbb K$
be given, with common zero set  $W$. We say that equation \eqref{ode} {\em admits the algebraic side conditions}  $\gamma_1=\cdots=\gamma_s=0$ if there exists a solution trajectory of \eqref{ode} which is contained in $W$.
\end{definition}
In other words, 
we look for solutions of  \eqref{ode} that are restricted to some prescribed
``subvariety" (i.e., a subset defined by finitely many smooth equations). The definition implies that $W$ is nonempty, but we do not require the whole subset to be invariant for the system.

\begin{proposition}\label{algsi}  Let smooth functions be given as in Definition \ref{algsidecon}.\\
{\em(a)} The differential equation \eqref{ode} admits the algebraic side conditions $\gamma_1=\cdots=\gamma_s=0$ only if the set of functions
$$ \left\{ X_f^k (\gamma_j) ; \, k \geq 0 , \, 1 \leq j \leq s \right\} = \\
\left\{ \gamma_1, \ldots, \gamma_s, \, X_f(\gamma_1) , \ldots ,
X_f(\gamma_s) \, , X_f^2(\gamma_1) , \ldots \right\} $$ has a
common zero. \\
{\em(b)} If $f$ and all $\gamma_i$ are analytic (or polynomial), the converse holds: Whenever the common zero set  $\widetilde{W}$ of
the $ X_f^k (\gamma_j)$ is not empty then it is invariant for \eqref{ode}.
\end{proposition}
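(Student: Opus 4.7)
The plan is to dispatch the two parts by essentially elementary means, with analyticity entering only in part (b). The main work in (a) is just differentiation along the flow, while in (b) the delicate point is upgrading containment of one trajectory in $W$ to invariance of the larger set $\widetilde W$.

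For part (a), suppose $F(\cdot,y)$ is a solution trajectory contained in $W$ on some interval around $0$. Then each scalar function $t\mapsto\gamma_j(F(t,y))$ vanishes identically, so differentiating $k$ times in $t$ and evaluating at $t=0$ yields
\[
X_f^k(\gamma_j)(y)=\left.\frac{d^k}{dt^k}\gamma_j(F(t,y))\right|_{t=0}=0
\]
for all $k\geq 0$ and $1\leq j\leq s$. Thus $y$ is a common zero of every $X_f^k(\gamma_j)$. Only smoothness is needed here.

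For part (b), fix $y\in\widetilde W$. Under the analyticity assumption the function $t\mapsto\gamma_j(F(t,y))$ is represented near $0$ by its convergent Taylor series
\[
\gamma_j(F(t,y))=\sum_{k\geq 0}\frac{t^k}{k!}\,X_f^k(\gamma_j)(y),
\]
whose coefficients all vanish by the defining property of $\widetilde W$. Hence $F(t,y)\in W$ for small $t$, giving at least a trajectory in $W$. To promote this to $F(t,y)\in\widetilde W$, I apply the identical Taylor argument to each analytic function $X_f^m(\gamma_j)$: its derivatives along the flow are the $X_f^{m+k}(\gamma_j)(y)$, which again vanish for $y\in\widetilde W$. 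Therefore $X_f^m(\gamma_j)(F(t,y))=0$ for all $m,j$ and all small $t$, so the trajectory stays in $\widetilde W$.

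An alternative, more structural route for (b) uses Remark \ref{invcrit}: the ideal generated by the family $\{X_f^k(\gamma_j)\}$ is by construction $X_f$-invariant, and in the polynomial ring or in the local ring of convergent analytic germs (both Noetherian) it is finitely generated. Selecting finitely many generators $\psi_1,\ldots,\psi_r$ from within the family produces a system whose $X_f$-images lie in the ideal they generate, so Remark \ref{invcrit} directly yields invariance of the common zero set, which coincides with $\widetilde W$. The essential obstacle---and the reason analyticity or polynomiality is indispensable in (b)---is that in the purely smooth category neither the Taylor representation nor Noetherianity of the relevant function rings is available, so an a priori infinite system of vanishing conditions need not collapse to a finite invariant one.
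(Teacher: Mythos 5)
Your proof is correct and follows essentially the same route as the paper: part (a) by differentiating $\gamma_j$ along the flow and evaluating at $t=0$, part (b) via the Lie series formula. You are in fact a bit more explicit than the paper's one-line citation, since you spell out the step of applying the Lie series to every $X_f^m(\gamma_j)$ (using $X_f^k\bigl(X_f^m(\gamma_j)\bigr)=X_f^{k+m}(\gamma_j)$) to upgrade ``trajectory contained in $W$'' to invariance of $\widetilde W$ itself, which is exactly what the statement of (b) requires.
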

\begin{proof} Consider the local flow $F$. The first assertion follows from the relation
$$ \frac{d^k}{dt^k}\gamma_j(F(t,\,y))=X_f^k(\gamma_j)(F(t,\,y)).$$
The second assertion is a consequence of the Lie series formula (see e.g. Groebner and Knapp \cite{GrKn})
$$ \gamma_j(F(t,y))=\sum_{k\geq 0}\frac{t^k}{k!}X_f^k(\gamma_j)(y). $$
\end{proof}
\begin{remark}{\em  In the local
analytic setting, finitely many of the $X_f^k(\gamma_j)$ suffice to
determine $\widetilde{W}$, and the same statement applies to polynomial vector fields and functions $\gamma_i$. In other words, the criterion from Remark \ref{invcrit} will hold for a generating set consisting of finitely many  $X_f^k(\gamma_j)$. This is due to the Noetherian property of the power series and polynomial rings in $n$ variables (see e.g. Ruiz \cite{Rui} and Kunz \cite{Kun}). \\
\rightline {$\diamond$}}
\end{remark}
\begin{remark}\label{nonrob}
{\em The property of admitting a given side condition is not robust with respect to small perturbations, as will be seen in the examples below. The more appropriate question seems whether a perturbation of such a (smooth or analytic) system will admit a suitably perturbed side condition. One result concerning this problem is given in Proposition \ref{asnear} below.}
\rightline {$\diamond$}
\end{remark}
At first sight, transferring Olver's and Rosenau's approach from \cite{OlRo1}
 to the setting of ordinary differential
equations should involve more than just algebraic side
conditions. Rather it may seem appropriate to consider ``(ordinary) differential side
conditions'', i.e., to assume that \eqref{ode} is augmented by additional ordinary
differential equations which lead to an overdetermined system.
But as the following elementary observation shows, the existence of such differential side conditions is equivalent to the existence of
algebraic side conditions.
\begin{remark}\label{diffsi} {\em (a) Let $\phi_1 , \ldots , \phi_s$
and $\rho_1 , \ldots , \rho_s$ be smooth functions on $U$. Assume that some solution $z(t)$ of \eqref{ode} satisfies additional differential conditions of first order, of the type
\begin{equation}\label{diffside}
\frac{d}{dt}\phi_j(z(t)) = \rho_j(z(t))
\ , \ \ 1 \leq j \leq s \ . 
\end{equation}
Then $z(t)$ is contained in the common zero set of the functions
\[
\theta_j := X_f(\phi_j) - \rho_j,\quad1\leq j\leq s.
\]
Conversely, if the common zero set of these $\theta_j$ contains a nonempty invariant set of \eqref{ode}, then there exists a solution of \eqref{ode} which satisfies the differential side conditions \eqref{diffside}.
 This is a direct consequence of the relation
$$ \frac{d}{dt}\phi_j(z(t)) = X_f(\phi_j)(z(t)) $$
for any solution $z(t)$ of \eqref{ode}.\\
(b) Higher order ordinary differential side conditions may be rewritten as systems of first order side conditions, as usual, hence as algebraic side conditions.\\
(c) More generally one could think of ``partial differential side conditions", thus regarding \eqref{ode} as a degenerate partial differential equation system for which only one independent variable $t$ occurs explicitly. But by this ansatz one would return to the general framework of \cite{OlRo1}; we will not pursue it further.}\\
\rightline {$\diamond$}
\end{remark}
\subsection{Examples }

We provide a few examples for algebraic and differential side conditions, to illustrate the procedure, and to show that the heuristics will provide nontrivial information only in special circumstances. Examples \ref{expara2} and \ref{excirc} involve differential side conditions.

\begin{example}\label{expara1}{\em 

Consider \eqref{ode} with
$$
f(x)=\left(\begin{array}{c} x_1-x_2^2+x_3 \\ x_3\\
x_1+x_1^2+2x_2x_3\end{array}\right)\ .
$$
To determine invariant sets contained in the zero set of
$\gamma(x):=x_1$, compute $X_f(\gamma)\,(x)= x_1-x_2^2+x_3$ and
furthermore
$$
X_f^2(\gamma)\,(x)= (2+x_1)x_1-x_2^2+x_3=  (1+x_1)\gamma(x)+ X_f(\gamma)(x).
$$
The last equality implies that the common zero set of all the $X_f^k(\gamma)$, which is invariant according to Proposition \ref{algsi}, is equal to the common zero set of $\gamma$ and
$X_f(\gamma)$. Thus the parabola, defined by $x_1=x_2^2-x_3=0$,
is invariant for $f$.}
\end{example}

\begin{example}\label{exlasa}{\em 

In the qualitative theory of ordinary differential equations the setting of Proposition \ref{algsi} occurs
naturally: Assume that \eqref{ode} admits a Lyapunov function
$\psi$ on $U$. Then the LaSalle principle (see e.g. Hirsch, Smale and Devaney \cite{HSD}, Ch.~9) states that
any omega-limit set is contained in the zero set of
$\gamma:=X_f(\psi)$, thus all nonempty limit sets are obtained from the side condition $X_f(\psi)$. \\
As a specific example, consider the motion in an $n$-dimensional potential $\psi$ with generalized linear friction; i.e. the  system
\[
\begin{array}{rcl}
\dot x_1&=& x_2\\
\dot x_2&=& -{\rm grad}\,\psi(x_1) -A\,x_2
\end{array}
\]
in $\mathbb R^{2n}$, with a matrix $A$ whose symmetric part $A+A^{\rm tr}$ is positive definite. For $\theta(x)= \left<x_2,\,x_2\right> +2\psi(x_1)$ (with the brackets denoting the standard scalar product in $\mathbb R^n$) one finds $X_f(\theta)=-\left<(A+A^{\rm tr})x_2,\,x_2\right>$, whence $\theta$ is a Lyapunov function and any limit set is contained in the zero set of (all components of) $x_2$. By invariance of limit sets one arrives at the familiar conclusion that any limit point of the system is stationary.
}
\end{example}

\begin{example}\label{expara2}{\em 

Consider again \eqref{ode} with $f$ from Example \ref{expara1}.
Assume that this equation admits the differential side condition
$\frac{d}{dt}(x_1^2+x_2) = x_3$ . Using Remark \ref{diffsi},  $\phi:=x_1^2+x_2$ and
$\rho:=x_3$, one sees that $z(t)$ lies in the zero set of
$$ \gamma:= X_f(\phi) - \rho = 2 x_1^2 - 2 x_1 x_2^2 + 2 x_1 x_3 - x_3 \ , $$
and proceeding with straightforward
computations (that are omitted here) provides only the obvious invariant set
$\{0\}$. The heuristics yields no interesting information here. See, however, the following example.}
\end{example}
\begin{example}\label{excirc}{\em 
Given  the two-dimensional system with
$$
f(x)=\left(\begin{array}{c} -x_1-x_2+x_1^3+x_1x_2^2\\
(1+\beta)x_1+x_2-x_1^3-x_1^2x_2-x_1x_2^2-x_2^3\end{array}\right),
\quad \beta\in {\mathbb R} $$ we search for a solution $z(t)$ such
that $\ddot z_1=-z_1$. We transfer this side condition to first order
by setting
$\phi_1:=x_1,\,\phi_2:=X_f(x_1)=-x_1-x_2+x_1^3+x_1x_2^2$,
hence we search for solutions contained in
the common zero set of
\[
\begin{array}{rcl}
\gamma_1:&=&X_f(\phi_1)-\phi_2\\
\gamma_2:&=&X_f(\phi_2)+\phi_1

\end{array}
\]
(the differential side condition having been transformed to the algebraic side condition $\gamma_2=0$ via Remark \ref{diffsi}).
Consider first the case $\beta=1$. Setting
$\sigma:=x_1^2+x_2^2-1$, a straightforward calculation shows that
the circle given by $\sigma=0$ is a possible candidate (since
$\sigma$ is a factor in $\gamma_2$), and furthermore that
$z_1(t)=\cos t,\,z_2(t)=\sin t$ provides indeed a solution with
the desired properties. For $\beta\not=1$, an equally
straightforward (but more tedious) calculation shows that the
$X_f^k(\gamma_j)$ have only $0$ as common zero, thus for $\beta\not=1$ the only
solution satisfying the differential side condition is trivial.}
\end{example}
\begin{example}\label{exnonrob} {\em This example illustrates Remark \ref{nonrob}. Consider the system 
\[
\begin{array}{rcl}
\dot x_1&=& x_1\cdot \phi(x_1,\,x_2) + \varepsilon \nu(x_2)\\
\dot x_2&=& \psi(x_1,\,x_2)

\end{array}
\]
with smooth functions $\phi$, $\nu$, $\psi$ of the indicated variables, and a parameter $\varepsilon\in\mathbb R$. For $\varepsilon=0$ the zero set of $x_1$ is invariant for the system, but for $\varepsilon \not=0$ there exists an invariant set admitting the side condition $x_1=0$ if and only if $\nu$ and the function defined by $x_2\mapsto \psi(0,\,x_2)$ have a common zero.  The less restrictive question about the existence of an invariant set given by an equation $x_1+\varepsilon\rho(x,\varepsilon)=0$ certainly has an affirmative answer (locally, e.g. near $0$) whenever $\phi(0,\,0)\not=0$ and $\psi(0,\,0)\not=0$; see also Proposition \ref{asnear} below.
}
\end{example}
\subsection{Side conditions of LaSalle type}
As the examples indicate, a trial-and-error side condition ansatz will generally not yield any invariant sets, thus for a prescribed side condition $\gamma$ the common zero set of the $X_f^k(\gamma)$ will generally be empty. To make the general side condition heuristics more workable, appropriate restrictions should be imposed. Example \ref{exlasa} may serve as a motivation for one such restriction; note that at least the stationary points of $f$ satisfy any side condition of this type. Thus we are led to:
\begin{definition} We call a side condition $\gamma$ to be of {\em LaSalle type} if there is some function $\theta$ such that $\gamma=X_f(\theta)$.
\end{definition}
Side conditions of LaSalle type generalize a scenario which occurs in a different -- and quite familiar -- context.
\begin{remark} {\em Let $\phi$ be a function of $n$ variables. If the first order system \eqref{ode} has the form
\[
f(x)=\left(\begin{array}{c}x_2\\
                                    \vdots\\
                                     x_n\\
                              \phi(x_1,\ldots,x_n)\end{array}\right),
\]
thus corresponds to the $n^{\rm th}$ order differential equation
\[
x^{(n)}= \phi(x,\,\dot x,\,\ldots, x^{(n-1)})
\]
then it admits the side condition $\gamma=X_f(x_1)$ of LaSalle type. The invariant set obtained from this side condition is precisely the set of stationary points.}\\
\rightline {$\diamond$}
\end{remark}
Invariant sets obtained from LaSalle type side conditions can be characterized more precisely and, to some extent, obtained in an algorithmic manner. In order for the hypotheses of the following Proposition to be satisfied, one may have to pass from $U$ to a suitable open subset (which is dense in the analytic setting for connected $U$).
\begin{proposition}\label{lasaprop} Let $\theta:\,U\to \mathbb K$, and let $k$ be a positive integer such that $\left\{\theta,\,X_f(\theta),\ldots,X_f^k(\theta)\right\}$ are functionally independent but 
\[
\left\{\theta,\,X_f(\theta),\ldots,X_f^k(\theta),X_f^{k+1}(\theta)\right\}
\]
are functionally dependent at all points of $U$. By the implicit function theorem there exists a function $\mu$ of $k+2$ variables such that
\begin{equation}\label{vanish}
\mu(\theta,\,X_f(\theta),\ldots,X_f^k(\theta),X_f^{k+1}(\theta))=0\quad{\rm on}\,\, U.
\end{equation}
Denoting by $D_{i}$ the partial derivative with respect to the $i^{\rm th}$ variable, define
\[
U^*:=\left\{x\in U;\, D_{k+2}\,\mu\left(\theta(x),0,\ldots, 0\right)\not=0\right\}.
\]
Then the subset 
\[
Z:=\left\{ x\in U^*;\,X_f(\theta)(x)=\cdots=X_f^{k+1}(\theta)(x)=0 \right\}
\]
 is invariant for the restriction of \eqref{ode} to $U^*$.
\end{proposition}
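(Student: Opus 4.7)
The plan is to verify the hypotheses of Remark \ref{invcrit} for the functions $\gamma_j := X_f^j(\theta)$, $j=1,\dots,k+1$, whose common zero set (intersected with $U^*$) is precisely $Z$. For $j=1,\dots,k$ the identity $X_f(\gamma_j)=X_f^{j+1}(\theta)=\gamma_{j+1}$ trivially exhibits $X_f(\gamma_j)$ as a linear combination of $\gamma_1,\dots,\gamma_{k+1}$, so the only real work lies in the case $j=k+1$.

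To handle $j=k+1$ I would differentiate the functional dependence relation \eqref{vanish} along the flow. Applying $X_f$ and using the chain rule gives
\[
\sum_{i=0}^{k+1} D_{i+1}\mu\bigl(\theta,\,X_f(\theta),\dots,\,X_f^{k+1}(\theta)\bigr)\,X_f^{i+1}(\theta)=0,
\]
and wherever the coefficient of the top-order term is nonzero one can solve
\[
X_f^{k+2}(\theta)=-\sum_{i=0}^{k}\frac{D_{i+1}\mu}{D_{k+2}\mu}\,X_f^{i+1}(\theta),
\]
expressing $X_f(\gamma_{k+1})$ as a smooth linear combination of $\gamma_1,\dots,\gamma_{k+1}$ with coefficients $\nu_{k+1,i}=-D_{i+1}\mu/D_{k+2}\mu$.

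The main subtlety, and the reason the statement requires passage to $U^*$, is ensuring that the denominator $D_{k+2}\mu$ does not vanish on the relevant set. At any $x\in Z$ the arguments of $\mu$ collapse to $(\theta(x),0,\dots,0)$, which is precisely the tuple used in the definition of $U^*$; since $Z\subset U^*$, the derivative $D_{k+2}\mu$ is nonzero at each point of $Z$. By continuity there is an open set $V$ with $Z\subset V\subset U^*$ on which $D_{k+2}\mu\bigl(\theta,X_f(\theta),\dots,X_f^{k+1}(\theta)\bigr)$ remains nonzero, hence the coefficients $\nu_{k+1,i}$ are smooth on $V$. Remark \ref{invcrit} then yields invariance of $Z$ under the restriction of \eqref{ode} to $V$, and since trajectories through points of $Z$ remain in $Z\subset V$ this is the same as invariance under the restriction to $U^*$. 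I expect no obstacle beyond this continuity step; the algebra is a single chain-rule computation, and everything else is bookkeeping. (A parallel route, should one wish to avoid invoking Remark \ref{invcrit}, is to note that for $x_0\in Z$ the functions $g_i(t):=X_f^i(\theta)(F(t,x_0))$ satisfy a smooth ODE system admitting $g_1=\dots=g_{k+1}\equiv 0$ as a solution, so uniqueness forces the trajectory to remain in $Z$.)
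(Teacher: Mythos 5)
Your proof is correct and follows essentially the same route as the paper's: apply $X_f$ to the dependence relation \eqref{vanish} and conclude via Remark \ref{invcrit}. The only difference is that you carefully justify the step the paper leaves implicit, namely dividing by $D_{k+2}\mu$ and using continuity near $Z$ (where the arguments of $\mu$ collapse to $(\theta(x),0,\dots,0)$) to get smooth coefficients $\nu_{k+1,i}$ on a neighborhood of $Z$ in $U^*$.
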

\begin{proof} Taking the Lie derivative of the identity \eqref{vanish} one obtains
\[
0=\sum_{i=1}^{k+2} D_i\mu\left(\theta,\,X_f(\theta),\ldots,X_f^{k+1}(\theta)\right)\cdot X_f ^iî(\theta)
\]
and invariance follows from Remark \ref{invcrit}.
\end{proof}
\begin{remark}\label{lasarem} {\em (a) The Proposition suggests the following trial-and-error approach: For a ``test function" $\theta$ form $X_f(\theta),\,X_f^2(\theta)\ldots$ and stop at the smallest $\ell$ such that the functions
\[
\theta,\,X_f(\theta),\ldots, X_f^{\ell+1}(\theta)
\]
are functionally dependent on $U$. Then check the common zero set of $X_f(\theta),\ldots, X_f^{\ell+1}(\theta)$ for invariance. Here, invariant sets of positive dimension are of particular interest.
\\
(b) For polynomial functions and vector fields it is known that $\mu$ may be chosen as a polynomial (which is algorithmically accessible), and relation \eqref{vanish} will hold throughout $\mathbb K^n$.
}\\
\rightline {$\diamond$}
\end{remark}
\begin{example}\label{exlasa3d}{\em Let
\[
f(x)=\left(\begin{array}{c}
x_2+x_3-x_1x_2-x_2x_3-x_2^2x_3\\
x_2-x_2^2+x_2x_3\\
x_1+x_3+x_2^2
\end{array}\right)
\]
and
\[
\theta(x)=x_1+x_2x_3\,.
\]
One computes
\[
\begin{array}{rcl}
X_f(\theta)&=& x_2+x_3\\
X_f^2(\theta)&=& x_1+x_2+x_3+x_2x_3
\end{array}
\]
which shows that $X_f^2(\theta)=X_f(\theta)+\theta$, and by Proposition \ref{lasaprop} the common zero set $Z$ of $X_f(\theta)$ and $X_f^2(\theta)$ (which is a parabola in $\mathbb R^3$, defined by $x_1=x_3^2$ and $x_2=-x_3$) is invariant for the system. 
}
\end{example}
At this point, a few words on the practicality of the approach may be in order. Example \ref{exlasa3d} was actually tailored to illustrate a nontrivial application of Proposition \ref{lasaprop} (i.e., yielding an invariant set of positive dimension), but it should be noted that the trial-and-error approach can indeed be systematized for polynomial equations, using standard methods from algorithmic algebra (for these see e.g. Cox, Little and O'Shea \cite{CLO}). Given a polynomial vector field $f$ on $\mathbb K^n$, one may start with a polynomial ``test function" of fixed degree, with undetermined coefficients (e.g. $\theta$ of degree one, with 
$n$ undetemined coefficients of $x_1,\ldots,x_n$) and evaluate the determinantal condition for functional dependence of
\[
\theta,\,X_f(\theta),\ldots, X_f^{k-1}(\theta),\quad k\leq n.
\]
This in turn will provide conditions on the undetermined coefficients in the test function. If a nontrivial test function remains, proceed to determine a polynomial $\mu$ as in Remark \ref{lasarem} (see \cite{CLO}, Ch.~3 for this step) and apply the Proposition. In this way one has an algorithmic approach to determine invariant sets, which will indeeed work for the above example (starting with undetermined test functions of degree 2). But, since polynomial vector fields generally do not possess algebraic invariant sets of positive dimension, the search may still yield only trivial results.\\
For a variant see also the final section if this paper. Similar observations apply, in principle, to the local analytic case.

\section{Side conditions which inherit properties of\\ symmetric systems}
We return to our vantage point of imposing (appropriate) restrictions on the class of side conditions to be investigated. Historically, the concept seems to have emanated from the group analysis of differential equations.
Thus, side conditions were (and are) constructed from local transformation groups and equations which determine group orbits; see Bluman and Cole \cite{BluCo}, Levi and Winternitz \cite{LW1, LW2} and Pucci and Saccomandi \cite{PS2}, to name only a few references.
In this section we follow the classical approach by first recalling typical invariant sets of symmetric systems \eqref{ode}, which we then take as a motivation for particular types of side conditions. In dimension two there is a relatively strong correspondence between side conditions and symmetry properties.
\subsection{Invariant sets from symmetries}
We first assume that \eqref{ode} admits smooth orbital infinitesimal symmetries $g_1,\ldots, g_r$ on $U$; hence there exist smooth functions $\alpha_i$ on $U$ such that the identities
\begin{equation}\label{sym}
\left[ g_i,\,f\right]=\alpha_i f,\,1\leq i\leq r
\end{equation}
hold for the Lie brackets 
\[
\left[g_i,\,f\right](x)=Df(x)\,g_i(x)-Dg_i(x)\,f(x)
\] 
throughout $U$. Let us recall some basic results on group-invariant solutions and generalizations. 

\begin{proposition}\label{groupinv} {\em (a)} If \eqref{sym} holds then the set
\[
Y:=\left\{ x\in U;\,\dim_{\mathbb K^n}\left<f(x),\,g_1(x),\ldots,g_r(x)\right>\leq r\right\}
\]
(the brackets denoting the linear subspace spanned by a set of vectors here and in the following) is invariant for \eqref{ode}.\\
{\em (b)} If all $\left[ g_i,\,f\right]=0$ then 
\[
Z:=\left\{ x\in U;\,\dim_{\mathbb K^n}\left<g_1(x),\ldots,g_r(x)\right>\leq r-1\right\}
\]
is invariant for \eqref{ode}.
\end{proposition}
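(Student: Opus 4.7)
The unifying idea is to encode the defining equations of $Y$ and $Z$ as coefficients of a suitable multivector field, and then observe that this multivector is preserved by the flow of $f$. For part~(a), I would form the $(r+1)$-multivector
\[
\omega\;:=\;f\wedge g_1\wedge\cdots\wedge g_r.
\]
In the standard basis $\partial_I:=\partial/\partial x_{i_0}\wedge\cdots\wedge\partial/\partial x_{i_r}$ indexed by $(r+1)$-subsets $I\subset\{1,\ldots,n\}$ one has $\omega=\sum_I M_I\,\partial_I$, where $M_I$ is the $(r+1)\times(r+1)$ minor of the matrix $[f\mid g_1\mid\cdots\mid g_r]$ on the rows in $I$. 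By construction, the common zero set of the $M_I$ is precisely $Y$, so by Remark~\ref{invcrit} it suffices to show $X_f(M_I)=\sum_J\nu_{IJ}\,M_J$ for smooth coefficients~$\nu_{IJ}$.

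To this end I would apply the derivation rule $\mathcal{L}_f(v_1\wedge\cdots\wedge v_s)=\sum_k v_1\wedge\cdots\wedge[f,v_k]\wedge\cdots\wedge v_s$, together with $[f,f]=0$ and $[f,g_k]=-\alpha_k f$, to obtain
\[
\mathcal{L}_f\omega\;=\;-\sum_{k=1}^r \alpha_k\,f\wedge g_1\wedge\cdots\wedge f\wedge\cdots\wedge g_r\;=\;0,
\]
each summand vanishing by antisymmetry since $f$ already occupies the first slot. On the other hand, expanding $\mathcal{L}_f\omega=\sum_I\bigl(X_f(M_I)\,\partial_I+M_I\,\mathcal{L}_f\partial_I\bigr)$ and using the elementary identity $[f,\partial/\partial x_j]=-\sum_i(\partial f_i/\partial x_j)\,\partial/\partial x_i$, the terms $\mathcal{L}_f\partial_I$ become smooth combinations of the $\partial_J$ with $|J|=r+1$. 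Comparing coefficients of basis multivectors in the identity $\mathcal{L}_f\omega=0$ then yields the desired relations $X_f(M_I)=\sum_J\nu_{IJ}M_J$, and Remark~\ref{invcrit} concludes invariance of~$Y$.

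For part~(b) the argument is structurally identical with $\omega$ replaced by the $r$-multivector $g_1\wedge\cdots\wedge g_r=\sum_I N_I\partial_I$ ($|I|=r$). The hypothesis $[g_i,f]=0$ makes $\mathcal{L}_f(g_1\wedge\cdots\wedge g_r)=0$ immediately (without needing the antisymmetry trick used in~(a)), and the same coefficient-comparison yields $X_f(N_I)=\sum_J\mu_{IJ}N_J$, whence Remark~\ref{invcrit} gives invariance of~$Z$.

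The only genuinely substantive point is the cancellation in~(a): it requires $f$ itself to appear as a factor in $\omega$, which is exactly why $Y$ is defined by the rank drop of the enlarged family $\{f,g_1,\ldots,g_r\}$ and why the weaker orbital hypothesis $[g_i,f]=\alpha_i f$ (rather than $=0$) suffices. The remaining bookkeeping for expanding $\mathcal{L}_f\partial_I$ in the basis is routine; if one wishes to avoid multivector-field machinery, the same conclusion is obtained by differentiating each minor $M_I$ directly via the column-rule $X_f(\det A)=\sum_k\det(A^{(k)})$, substituting $[f,g_k]=-\alpha_k f$, and discarding the $\alpha_k$ terms by column repetition.
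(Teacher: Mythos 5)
Your proof is correct. The paper does not write out a proof of Proposition \ref{groupinv} but refers to \cite{WMul}, Theorem 3.1, and the argument there (reflected in the paper's subsequent Remark characterizing $Y$ and $Z$ via alternating $(r+1)$- and $r$-forms) is precisely the dual of your multivector computation: one differentiates the minors $\widetilde\Delta(f,g_1,\ldots,g_r)$ along the flow, kills the $\alpha_k f$ contributions by antisymmetry, and concludes via Remark \ref{invcrit} --- so your approach is essentially the same one.
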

See for instance \cite{WMul}, Theorem 3.1. Note that no assumptions were made about any relation among the $g_i$. 
\begin{remark}{\em  There exist different characterizations of the sets above.\\

\noindent(a) One has $x\in Y$ if and only if 
$\widetilde\Delta\left(f(x),\,g_1(x),\ldots,g_r(x)\right)=0$
for every alternating $(r+1)$-form $\widetilde\Delta$.\\
(b) One has $x\in Z$ if and only if 
$\Delta\left(g_1(x),\ldots,g_r(x)\right)=0$
for every alternating $r$-form $\Delta$.}\\
\rightline {$\diamond$}

\end{remark}

\begin{remark} {\em (a) If $r=1$ then the infinitesimal symmetry $g_1$ generates a local one-parameter group, and $Y$ is the union of group-invariant solutions (in the sense of \cite{Olv}, Section 3.1) and stationary points of $g_1$. For arbitrary $r$, if the $g_i$ span a finite dimensional Lie algebra, one obtains the group-invariant solutions by taking the intersection of all the sets defined by $\widetilde \Delta(f,\,g_i)=0$, with every alternating 2-form $\widetilde \Delta$.\\
(b) In some settings, Proposition \ref{groupinv} provides all relevant invariant sets. For instance, if the $g_i$ span the Lie algebra of a reductive linear algebraic group then all common invariant sets of the differential equations admitting the infinitesimal symmetries $g_1,\ldots, g_r$ can be obtained from $\Delta(g_1^*(x),\ldots,g_s^*(x))=0$, with suitable linear combinations $g_j^*$ of the $g_i$, and $\Delta$ running through all alternating $s$--forms, $s\leq r$, and set-theoretic operations. See \cite{GSW}, Theorem 3.6.\\
(c) If \eqref{sym} holds and some $\alpha_i\not=0$ then $Z$ is not necessarily invariant for \eqref{ode}. A simple example in $\mathbb K^2$ is 
\[
f(x)=\left(\begin{array}{c}1\\ 0 \end{array}\right),\quad g(x)=\left(\begin{array}{c}x_1\\ 0 \end{array}\right) \mbox{  with  } \left[g,\, f\right]=-f. 
\]
The set of all $x$ with $g(x)=0$ (in other words, $x_1=0$) is clearly not invariant for \eqref{ode}.
}\\
\rightline {$\diamond$}
\end{remark}
From a suitable relaxation of condition \eqref{sym} one still obtains invariant sets of \eqref{ode}. Assume that there are smooth functions $\alpha_i$, $\sigma_{ij}$ on $U$ such that 
\begin{equation}\label{orbred}
\left[ g_i,\,f\right]=\alpha_i f +\sum_j \sigma_{ij}\,g_j,\,1\leq i\leq r.
\end{equation}
If the $g_i$ are in involution then this condition characterizes local orbital reducibility of \eqref{ode} by the common invariants of $(g_1,\ldots,g_r)$; see \cite{CGW1, CGW2}. Moreover, if all the $\alpha_i=0$ then one has local reducibility. (If the $g_i$ span a finite dimensional Lie algebra then we have reduction of non-symmetric systems by group invariants; cf. Olver and Rosenau \cite{OlRo2}, as well as \cite{CGW1}.) But the following statements hold true even when the $g_i$ do not form an involution system.
\begin{proposition}\label{sigmainv} {\em (a)} Assume that \eqref{orbred} holds on $U$. Then the set $Y$, as defined in Proposition \ref{groupinv}, is invariant for \eqref{ode}.\\
{\em(b)} If, in addition, all $\alpha_i=0$ then the set $Z$, as defined in Proposition \ref{groupinv}, is invariant for \eqref{ode}.
\end{proposition}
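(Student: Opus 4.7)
The plan is to reduce both assertions to the invariance criterion of Remark \ref{invcrit}, applied to the natural defining equations of $Y$ and $Z$. Those sets are cut out, respectively, by the vanishing of all $(r+1)\times(r+1)$ minors of the matrix with columns $f,g_1,\ldots,g_r$ and by all $r\times r$ minors of the matrix with columns $g_1,\ldots,g_r$. Equivalently, by the remark preceding the proposition, they are the common zero sets of the scalar functions $\psi_{\widetilde\Delta}(x):=\widetilde\Delta(f(x),g_1(x),\ldots,g_r(x))$ (as $\widetilde\Delta$ runs over a basis of alternating $(r+1)$-forms on $\mathbb K^n$) and $\varphi_\Delta(x):=\Delta(g_1(x),\ldots,g_r(x))$ (as $\Delta$ runs over a basis of alternating $r$-forms). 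Both bases are finite, so it suffices to show that $X_f(\psi_{\widetilde\Delta})$ and $X_f(\varphi_\Delta)$ are smooth linear combinations of the chosen defining functions.

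For part (a), I differentiate $\psi_{\widetilde\Delta}$ along $f$ by the Leibniz rule, obtaining
\[
X_f(\psi_{\widetilde\Delta})(x)=\widetilde\Delta(Df\cdot f,g_1,\ldots,g_r)+\sum_{i=1}^r\widetilde\Delta(f,\ldots,Dg_i\cdot f,\ldots,g_r).
\]
Using the paper's Lie-bracket convention, $Dg_i\cdot f=Df\cdot g_i-[g_i,f]$, and substituting the hypothesis $[g_i,f]=\alpha_i f+\sum_j\sigma_{ij}g_j$ splits each summand into three pieces. The $\alpha_i f$ pieces vanish because $f$ then appears in two slots of an alternating form; the off-diagonal $\sigma_{ij}g_j$ pieces with $j\neq i$ vanish for the same reason applied to $g_j$; the diagonal pieces collapse to $-\sigma_{ii}\psi_{\widetilde\Delta}$. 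What remains are terms of the form $\widetilde\Delta(Df\cdot v_0,v_1,\ldots,v_r)$ with $(v_0,\ldots,v_r)=(f,g_1,\ldots,g_r)$ in some order. Grouped together, these define a new alternating $(r+1)$-form $\widetilde\Delta'(v_0,\ldots,v_r)=\widetilde\Delta(Df\cdot v_0,v_1,\ldots,v_r)+\sum_i\widetilde\Delta(v_0,\ldots,Df\cdot v_i,\ldots,v_r)$, whose coefficients in the fixed basis are smooth functions of $x$ (depending on $Df(x)$). Expanding $\widetilde\Delta'$ in the chosen basis expresses $X_f(\psi_{\widetilde\Delta})$ as $-(\mathrm{tr}\,\sigma)\psi_{\widetilde\Delta}$ plus a smooth linear combination of the other $\psi$'s, which is exactly the hypothesis of Remark \ref{invcrit}.

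For part (b), the computation is even shorter because $f$ no longer appears among the arguments. One has $X_f(\varphi_\Delta)=\sum_i\Delta(g_1,\ldots,Dg_i\cdot f,\ldots,g_r)$, and writing $Dg_i\cdot f=Df\cdot g_i-[g_i,f]$ with $[g_i,f]=\sum_j\sigma_{ij}g_j$ (using $\alpha_i=0$) kills all off-diagonal $\sigma_{ij}$ contributions by the alternation property and leaves $-(\mathrm{tr}\,\sigma)\varphi_\Delta$ together with terms $\Delta(g_1,\ldots,Df\cdot g_i,\ldots,g_r)$. Exactly as above, the latter are the values at $x$ of another alternating $r$-form applied to $(g_1,\ldots,g_r)$, hence smooth linear combinations of the basic $\varphi_\Delta$'s with $Df$-dependent coefficients. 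Remark \ref{invcrit} then yields invariance of $Z$.

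The bookkeeping of which pieces survive the alternating-form collapse is really the only delicate step; the crucial observation is that the hypothesis was tailored precisely so that $[g_i,f]$ lies in the span of $f$ and the $g_j$'s, which is the span that makes the alternating form vanish on all but the diagonal contribution. The presence of a nonzero $\alpha_i$ is harmless for $Y$ (the $\alpha_i f$ term has $f$ as a duplicate column), but is the reason why one must assume $\alpha_i=0$ in (b), since there $f$ is not already among the arguments of $\Delta$ and an $\alpha_i f$ contribution would have no duplicate to kill it.
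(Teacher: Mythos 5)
Your proof is correct, and it takes the approach one would expect: the paper itself only cites \cite{CGW1} (Corollary 2.9 and Theorem 2.19), and the argument there is precisely this kind of computation --- differentiate the alternating-form functions $\widetilde\Delta(f,g_1,\ldots,g_r)$ resp.\ $\Delta(g_1,\ldots,g_r)$ along $f$, use $Dg_i\cdot f=Df\cdot g_i-[g_i,f]$ together with \eqref{orbred}, and invoke the criterion of Remark \ref{invcrit} for the finitely many basis forms. Your bookkeeping of which terms die by alternation (including the observation that the map $\widetilde\Delta\mapsto\widetilde\Delta'$ built from $Df(x)$ again yields an alternating form, so the surviving terms are smooth combinations of the defining functions) is accurate, as is your explanation of why $\alpha_i=0$ is needed only in part (b).
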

For a proof see \cite{CGW1}, Corollary 2.9 and Theorem 2.19, with a slight modification of some arguments. Following the approach in Bluman and Cole \cite{BluCo}, Levi and Winternitz \cite{LW1, LW2}, Pucci and Saccomandi \cite{PS2}, among others, one will consider the sets defined by  Proposition \ref{groupinv} ff. as candidates for side conditions. \\
It may be appropriate to illustrate the various concepts and their interrelation, thus we give a small example. One may generalize the underlying construction and the arguments to connected compact linear groups and their Lie algebras; see \cite{CGW1}, Lemma 2.25.
\begin{example}\label{exgroup}{\em 
Let $\alpha$ and $\beta$ be smooth on $\mathbb R^2\setminus\left\{ 0\right\}$, and 
\[
f(x)=\alpha(x)\,\left(\begin{array}{c} x_1\\
                                                       x_2\end{array}\right)+\beta(x)\,\left(\begin{array}{c} -x_2\\
                                                       x_1\end{array}\right),\quad g(x)= \left(\begin{array}{c} -x_2\\
                                                       x_1\end{array}\right).
\]
(Note that every smooth vector field $f$ in $\mathbb R^2$ admits such a representation on $\mathbb R^2\setminus\{0\}$.)
Now $g$ is an infinitesimal symmetry of $f$ (in other words, the differential equation is $SO(2)$-symmetric) if and only if both $\alpha$ and $\beta$ are functions of $\phi(x)=x_1^2+x_2^2$ only. The differential equation \eqref{ode} is reducible by the invariant $\phi$ of $SO(2)$ if and only if $\alpha$ is a function of $\phi$ only; see \cite{CGW1}, Proposition 2.26. More generally, motivated by Proposition \ref{groupinv}, one may consider the side condition
\[
\gamma(x):=\det(f(x),\,g(x))=\alpha(x)\cdot\phi(x),
\]
thus investigate the zero set $Z$ of $\alpha$ for invariant subsets of \eqref{ode} in $\mathbb R^2\setminus\{0\}$. Any nonstationary invariant subset $\widetilde Z$ of $Z$ contains an arc of a circle $\phi(x)={\rm const.}\not=0$, since $\beta(x)\not=0$ for $x\in\widetilde Z$, hence the trajectory must be locally invariant for $g$. In the analytic setting, this is equivalent to invariance of the whole circle. Thus via the side condition $\gamma$ one will obtain stationary points and invariant circles centered at the origin. For a system admitting an invariant circle, assuming some genericity conditions, one finds via the Poincar\'e map that small perturbations of $f$ will still admit a closed trajectory. Here we have another illustration of Remark \ref{nonrob}.
}
\end{example}

\medskip

\subsection{Partial symmetries}
Partial symmetries of differential equation systems were introduced in \cite{CGPart}, as a generalization of notions such as conditional symmetry and weak symmetry. We will briefly (and in a simplified manner) review the concept for first order ODEs, and discuss the connection to algebraic side conditions. As in \cite{CGPart} we focus on a local one-parameter transformation group $G(s,\,y)$ (in particular $G(0,\,y)=y$) induced by a smooth vector field $X_g$ on $U$. For our purpose it is convenient to slightly adjust the wording in the definition:
\begin{definition} (a) We say that $g$ is an {\em infinitesimal partial symmetry} of \eqref{ode} if there exists a solution $z(t)$ of $\dot x=f(x)$ such that $G(s,z(t))$ is also a solution for all $s$ near $0$.\\
 (b) We say that $g$ is an {\em infinitesimal partial orbital symmetry} of \eqref{ode} if there is a solution $z(t)$ of $\dot x=f(x)$ such that $t\mapsto G(s,z(t))$ parameterizes a solution orbit of \eqref{ode} for all $s$ near $0$.

\end{definition}
We recall the adjoint representation
\[
{\rm ad}\,g\,(f):=\left[g,\,f\right]
\]
and the formula
\begin{equation}\label{brackie}
\frac{\partial}{\partial s}D_2G(s,x)^{-1}\,f(G(s,x))= {\rm ad}\,g(f)\,(G(s,x))
\end{equation}
where $D_2$, as above, denotes the partial derivative with respect to the second variable. (See e.g. Olver \cite{Olv}, Prop.~1.64.) The next result (essentially taken from \cite{CGPart}, Prop.~1) relates partial symmetries to side conditions.
\begin{proposition}\label{parsymchar}{\em (a)} The smooth vector field $g$ is a partial symmetry of \eqref{ode} only if the sets
\[
W_k:=\left\{x\in U;\,\left({\rm ad}\,g\right)^k(f)\,(x)=0\right\},\quad k\geq 1
\]
have nonempty intersection.\\
{\em (b)} The smooth vector field $g$ is a partial orbital symmetry of \eqref{ode} only if the sets
\[
\widetilde W_k:=\left\{x\in U;\,\dim_{\mathbb K^n}\left<f(x),\,\left({\rm ad}\,g\right)^k(f)\,(x)\right>\leq 1\right\},\quad k\geq 1
\]
have nonempty intersection.
\end{proposition}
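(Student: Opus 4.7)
The plan is to translate each partial symmetry condition into an identity for the pulled-back vector field $F(s,x):=D_2G(s,x)^{-1}\,f(G(s,x))$ evaluated along the trajectory $z(t)$. The identity \eqref{brackie} at $s=0$ gives $\partial_sF(0,x)=({\rm ad}\,g)(f)(x)$; iterating this (via the one-parameter group law $G(s+\varepsilon,x)=G(\varepsilon,G(s,x))$, which makes $\partial_s$ of the pullback equal to the pullback of $[g,\,\cdot\,]$) one obtains
\[
\frac{\partial^k}{\partial s^k}F(s,x)\bigg|_{s=0}=({\rm ad}\,g)^k(f)(x)\qquad(k\ge 1).
\]

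For part (a), I would observe that differentiating $G(s,z(t))$ in $t$ and demanding that the result again satisfy $\dot x=f(x)$ yields $D_2G(s,z(t))\,f(z(t))=f(G(s,z(t)))$, equivalently $F(s,z(t))\equiv f(z(t))$ as a function of $s$ near $0$. Every $s$-derivative of this constant function at $s=0$ vanishes, and the displayed identity then forces $({\rm ad}\,g)^k(f)(z(t))=0$ for all $k\ge 1$; that is, $z(t)\in\bigcap_{k\ge 1}W_k$.

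For part (b), the weaker requirement that $t\mapsto G(s,z(t))$ merely parameterize a solution orbit gives that $D_2G(s,z(t))\,f(z(t))$ is collinear with $f(G(s,z(t)))$, equivalently $F(s,z(t))\in\mathbb K\cdot f(z(t))$ for all $s$ near $0$ (the case $f(z(t))=0$ is trivial, since then $z(t)$ lies automatically in every $\widetilde W_k$). Since all $s$-derivatives of a smooth curve contained in a one-dimensional subspace stay in that subspace, the displayed identity gives $({\rm ad}\,g)^k(f)(z(t))\in\mathbb K\cdot f(z(t))$ for every $k\ge 1$, i.e., $z(t)\in\widetilde W_k$ for all $k$.

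I expect the main technical obstacle to be the justification of the iterated formula: \eqref{brackie} as written only handles the first $s$-derivative, and passing to arbitrary $k$ requires establishing the intertwining relation $\partial_s\circ(G_s)^*=(G_s)^*\circ({\rm ad}\,g)$ on vector fields, a standard but not entirely trivial consequence of the group law. Once this is in hand, both assertions reduce to elementary observations about smooth curves in one-dimensional subspaces of $\mathbb K^n$.
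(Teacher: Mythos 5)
Your argument is correct and follows essentially the same route as the paper: both translate the (orbital) partial symmetry condition into the statement that the pulled-back field $D_2G(s,\cdot)^{-1}f(G(s,\cdot))$ along $z(t)$ is constant in $s$ (resp.\ stays in the line $\mathbb K\cdot f(z(t))$, which the paper phrases via a time-reparameterization factor $\mu(s,t)$), and then differentiate repeatedly in $s$ using the iterated form of \eqref{brackie}. The intertwining identity you flag as the main technical point is exactly what the paper dispatches with ``an obvious induction,'' so there is no substantive gap.
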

\begin{proof}(a) Let $G$ denote the flow of $g$, and let $z(t)$ be a solution of \eqref{ode} such that $G(s,\,z(t))$ is also a solution for all $s$ near $0$. Then 
\[
f(z(t))=D_2G(s,z(t))^{-1}\,f(G(s,z(t)))
\]
holds for all $t$ and $s$ near $0$; and differentiation with respect to $s$ yields, by \eqref{brackie} and an obvious induction,
\begin{equation}\label{fullbrackie}
\begin{array}{rrl}
0&=&\frac{\partial^k}{\partial s^k}D_2G(s,z(t))^{-1}\,f(G(s,z(t)))\\
  &=& D_2G(s,z(t))^{-1}{\rm ad}\,g^k(f)(G(s,z(t)).
\end{array}
\end{equation}
The assertion follows. \\
The proof of part (b) involves a reparameterization of time; thus the argument starts from
\[
\dot z(t)=\mu(s,t)\,f(z(t))
\]
with smooth $\mu$ and $\mu(0,t)=1$, but then works analogously.
\end{proof}
\begin{corollary}{\em (a)} In the analytic setting the vector field $g$ is a partial symmetry of $\dot x=f(x)$  if and only if the intersection of the $W_k$, $k\geq 1$ contains a nonempty invariant set of this equation. \\
{\em (b)} In the analytic setting the vector field $g$ is a partial orbital symmetry of $\dot x=f(x)$  if and only if there is a nonempty invariant set  of this equation which is contained in the intersection of the $\widetilde W_k$, $k\geq 1$. 
\end{corollary}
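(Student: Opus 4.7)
My plan is to prove each equivalence by splitting into the two implications, with the ``only if'' directions being a direct strengthening of Proposition \ref{parsymchar} and the ``if'' directions reducing to a Taylor-series argument that exploits analyticity.

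\emph{Only if.} If $g$ is a partial symmetry witnessed by a solution $z(t)$, the relation \eqref{fullbrackie} established in the proof of Proposition \ref{parsymchar} shows that $(\mathrm{ad}\,g)^k(f)(z(t))=0$ for every $k\geq 1$ and all $t$ near $0$. Hence the trajectory $\{z(t)\}$ itself is a nonempty invariant subset of $\bigcap_{k\geq 1}W_k$. The same argument, with the weaker conclusion that $(\mathrm{ad}\,g)^k(f)(z(t))\in\langle f(z(t))\rangle$, handles (b).

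\emph{If.} For (a), let $M\subseteq\bigcap_k W_k$ be the given nonempty invariant set, pick $z_0\in M$, and set $z(t):=F(t,z_0)$, so $z(t)\in M$ for all $t$ near $0$. Introduce the auxiliary function
\begin{equation*}
h(s,t):=D_2G(s,z(t))^{-1}\,f(G(s,z(t))).
\end{equation*}
Since $\partial_t G(s,z(t))=D_2G(s,z(t))\,f(z(t))$, the curve $t\mapsto G(s,z(t))$ is a solution of \eqref{ode} precisely when $h(s,t)\equiv f(z(t))$. Iterating \eqref{brackie} as in \eqref{fullbrackie} gives
\begin{equation*}
\partial_s^k h(s,t)\big|_{s=0} \;=\; (\mathrm{ad}\,g)^k(f)(z(t)) \;=\; 0, \qquad k\geq 1,
\end{equation*}
using $z(t)\in W_k$. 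Analyticity of $f$ and $g$ ensures joint analyticity of $G$ and hence of $h(\,\cdot\,,t)$ near $s=0$; the Taylor series then collapses to $h(s,t)\equiv h(0,t)=f(z(t))$, as desired. Part (b) runs in parallel with $\widetilde W_k$ in place of $W_k$: every Taylor coefficient $\partial_s^k h(0,t)$ lies in the line $\langle f(z(t))\rangle$, and analyticity in $s$ propagates this to $h(s,t)\in\langle f(z(t))\rangle$ for $s$ near $0$, which is exactly the condition that $\partial_t G(s,z(t))$ be proportional to $f(G(s,z(t)))$.

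The main technical hurdle is uniformity of the Taylor expansion: one needs $h(\,\cdot\,,t)$ to be analytic on a common $s$-neighborhood for all $t$ in some interval. Joint analyticity of $G$, combined with $D_2G(0,x)=I$ (so that $D_2G(s,z(t))$ is invertible locally uniformly in $t$), supplies this. A minor auxiliary point in (b) is that when $f(z(t_*))=0$ the line $\langle f(z(t_*))\rangle$ degenerates; choosing $z_0$ as a non-stationary point of $M$---possible unless $M$ consists entirely of stationary points, a case that can be dispatched directly---avoids the issue.
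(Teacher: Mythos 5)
Your proof is correct and follows essentially the same route as the paper: the ``only if'' directions are Proposition \ref{parsymchar} together with the observation that the witnessing trajectory is itself a nonempty invariant set, and the ``if'' directions rest on the Lie series formula \eqref{liebrackser} --- your Taylor expansion of $h(\cdot,t)$ at $s=0$ is exactly that formula. The paper dispatches this in one line, and you have merely filled in the details (uniform analyticity in $s$, and the degenerate stationary case in (b), which the paper leaves implicit).
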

\begin{proof} For analytic $f$ and $g$, equation \eqref{fullbrackie} implies the Lie series formula
\begin{equation}\label{liebrackser}
D_2G(s,x)^{-1}\,f(G(s,x))= \sum_{k\geq 0}\frac{s^k}{k!}\left({\rm ad}\,g\right)^k(f)\,(x),
\end{equation}
from which in turn the assertions follow.
\end{proof}
\begin{remark}{\em
In any case, the existence of partial symmetries for $f$ implies the existence of particular side conditions. The simplest of these define $W_1$ resp. $\widetilde W_1$, and are explicitly given by
\begin{equation}\label{parsymside}
\left[g,\,f\right]=0,\quad\text{resp.}\quad \widetilde\Delta\left(\left[g,\,f\right],\,f\right)=0\text{ for every alternating $2$-form.}
\end{equation}
 Note the contrast to the symmetry case, where one has $g=0$ resp. $\widetilde\Delta(g,\,f)=0$ as simplest possible side conditions.}\\
\rightline {$\diamond$}
\end{remark}
\begin{example}\label{ex9}{\em Let
\[
g(x):=\left(\begin{array}{r} x_1\\
                                      -x_2\end{array}\right),\quad 
f(x):=\left(\begin{array}{c} x_1+ x_2+x_1^2x_2\\
                                      x_2 + x_1x_2^2\end{array}\right),\quad [g,\,f](x)=\left(\begin{array}{c} -2x_2\\
                                      0\end{array}\right)
\]
Since the zero set of the Lie bracket (given by $\psi:=x_2=0$) is indeed invariant for $f$, we have a partial symmetry $g$ as well as the side condition $\psi$ admitted by \eqref{ode}.
}
\end{example}
\subsection{Side conditions in dimension two}
In view of Propositions \ref{groupinv}, \ref{sigmainv} and \ref{parsymchar} we discuss side conditions for two-dimensional vector fields, with an obvious focus on invariant sets of dimension one. Here one obtains a rather clear picture relating nontrivial side conditions to nontrivial Lie bracket conditions.
The following facts about two-dimensional vector fields will be useful (see e.g. \cite{WPre}, Prop.~1.1 for a proof).
\begin{lemma}\label{twodimred}
Let $f$ and $g$ be smooth vector fields on the open set
$U\subseteq \mathbb K^2$, and assume that 
\[
\theta(x):=\det(f(x),g(x))\not=0 \mbox{ for }x\in\widetilde U\subseteq U,
\]
and $\widetilde U\not=\emptyset$. Then the identity
\begin{equation}\label{twodiminv}
\left[g,f\right]=\alpha f + \beta g
\end{equation}
holds on $\widetilde U$ with
\begin{equation}\label{twodcoeffs}
\alpha =\left(\frac{X_g(\theta)}{\theta}-{\rm div}\, g\right),\quad \beta=-\left(\frac{X_f(\theta)}{\theta}-{\rm div}\, f\right).
\end{equation}
\end{lemma}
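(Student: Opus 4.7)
The plan is to argue pointwise on $\widetilde{U}$. Since $\theta(x)\neq 0$ throughout $\widetilde{U}$, the vectors $f(x)$ and $g(x)$ span $\mathbb{K}^2$ at every such point, and there exist unique smooth functions $\alpha,\beta$ on $\widetilde{U}$ with $[g,f]=\alpha f+\beta g$. Applying the bilinear form $\det(\,\cdot\,,g)$ to both sides annihilates the $g$-component and yields $\det([g,f],g)=\alpha\,\theta$; the analogous step with $\det(f,\,\cdot\,)$ gives $\det(f,[g,f])=\beta\,\theta$. The task therefore reduces to expressing these two determinants in terms of $X_g(\theta)$, $X_f(\theta)$, ${\rm div}\,g$ and ${\rm div}\,f$.

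To this end I would bring in the standard volume form $\omega=dx_1\wedge dx_2$ on $\mathbb{K}^2$, for which $\omega(f,g)=\theta$ by definition. Since $\omega$ is a top-degree form one has $d\omega=0$, so $\mathcal{L}_X\omega=d\,\iota_X\omega=({\rm div}\,X)\,\omega$ for any smooth vector field $X$. Combined with the standard derivation identity
\[
(\mathcal{L}_X\omega)(Y_1,Y_2)=X(\omega(Y_1,Y_2))-\omega([X,Y_1],Y_2)-\omega(Y_1,[X,Y_2]),
\]
this gives a direct handle on the two determinants.

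Specialising $X=g$, $Y_1=f$, $Y_2=g$ yields $({\rm div}\,g)\theta=X_g(\theta)-\det([g,f],g)$, hence $\alpha=X_g(\theta)/\theta-{\rm div}\,g$. Specialising $X=f$, $Y_1=f$, $Y_2=g$, and using $[f,g]=-[g,f]$, yields $({\rm div}\,f)\theta=X_f(\theta)+\det(f,[g,f])$, hence $\beta=-\bigl(X_f(\theta)/\theta-{\rm div}\,f\bigr)$, matching the stated formulas.

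I do not anticipate a serious obstacle. The only spot for a slip is a sign in the derivation rule: the paper's convention $[g,f]=Df\cdot g-Dg\cdot f$ agrees with the standard commutator of vector fields acting as differential operators on functions, so the Cartan formula above applies verbatim. A purely coordinatewise alternative would expand $\theta=f_1g_2-f_2g_1$ and the two components of $[g,f]$ and verify both identities by direct computation; this works but is considerably longer than the volume-form argument.
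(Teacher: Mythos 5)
Your proof is correct: the pointwise decomposition via Cramer's rule, $\det([g,f],g)=\alpha\theta$ and $\det(f,[g,f])=\beta\theta$, combined with $\mathcal{L}_X\omega=(\mathrm{div}\,X)\,\omega$ for the standard volume form and the Leibniz rule for $\mathcal{L}_X$, yields exactly the coefficients \eqref{twodcoeffs}; I checked the signs (including the $[f,g]=-[g,f]$ step) and they come out right under the paper's convention $[g,f]=Df\,g-Dg\,f$, which is indeed the commutator $\mathcal{L}_g f$. Note, however, that the paper does not prove this lemma at all --- it defers to \cite{WPre}, Prop.~1.1, where the identity is established by the direct coordinatewise computation you mention as your fallback (expand $\theta=f_1g_2-f_2g_1$ and compare components). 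Your volume-form argument is therefore a genuinely different, self-contained route: it is coordinate-free, makes transparent why the combinations $X_g(\theta)/\theta-\mathrm{div}\,g$ and $X_f(\theta)/\theta-\mathrm{div}\,f$ appear (they measure the failure of $g$, resp.\ $f$, to preserve the area spanned by the frame $(f,g)$), and it generalizes immediately to $n$ vector fields on an open subset of $\mathbb{K}^n$ with nonvanishing determinant, whereas the brute-force computation buys only concreteness and avoids any appeal to differential-form machinery (which matters slightly for $\mathbb{K}=\mathbb{C}$, where one should either work with the holomorphic $2$-form or note that the identities are purely algebraic in the components and their first derivatives). The only point worth making explicit in a final write-up is the smoothness of $\alpha,\beta$ on $\widetilde U$, which follows from Cramer's rule since $[g,f]$ is smooth and $\theta$ is smooth and nonvanishing there.
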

One should not expect \eqref{twodiminv} to hold with smooth $\alpha$ and $\beta$ at any point where $\theta=0$. Actually, such an extension of $\beta$ beyond $\widetilde U$ is possible (roughly speaking)  if and only if the zero set of $\theta$ contains particular invariant sets for $f$. We will prove a precise version of this statement only for complex polynomial vector fields, to keep technicalities to a minimum. (See e.g.~Kunz \cite{Kun} for some notions of elementary algebraic geometry we will use below.)
\begin{proposition}\label{twodimside} Let $f$ and $g$ be polynomial vector fields on $\mathbb C^2$, with notation as in Lemma \ref{twodimred} and $\alpha$, $\beta$ from \eqref{twodcoeffs} (in particular these functions are rational). Let
\[
\det(f,\,g)=\theta=\sigma_1^{m_1}\cdots \sigma_r^{m_r}
\]
be the prime factorization, with pairwise relatively prime $\sigma_i$, $1\leq i\leq r$, and denote the zero set of $\sigma_i$ by $Y_i$. Then $\beta$ is regular at some point of $Y_j\setminus \bigcup_{i\not= j}Y_i $, $j\in\{1,\ldots,r\}$ if and only if $Y_j$ is invariant for \eqref{ode}.

\end{proposition}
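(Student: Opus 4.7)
The plan is to reduce the regularity question for $\beta$ to a divisibility statement about $\sigma_j$ and $X_f(\sigma_j)$ in the polynomial ring $\mathbb{C}[x_1,x_2]$, and then to invoke the invariance criterion of Remark \ref{invcrit}. Since ${\rm div}\,f$ is a polynomial, hence regular everywhere, $\beta$ is regular at a point $p$ if and only if the rational function $X_f(\theta)/\theta$ is. Applying the logarithmic derivative to the prime factorization $\theta=\sigma_1^{m_1}\cdots\sigma_r^{m_r}$ gives
\[
\frac{X_f(\theta)}{\theta}=\sum_{i=1}^r m_i\,\frac{X_f(\sigma_i)}{\sigma_i}.
\]
For $p\in Y_j\setminus\bigcup_{i\neq j}Y_i$ each summand with $i\neq j$ is regular at $p$, since $\sigma_i(p)\neq 0$, so the whole question reduces to regularity of $X_f(\sigma_j)/\sigma_j$ at $p$.

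Next I would establish the key lemma: the rational function $X_f(\sigma_j)/\sigma_j$ is regular at some point of $Y_j$ if and only if $\sigma_j$ divides $X_f(\sigma_j)$ in $\mathbb{C}[x_1,x_2]$. The ``if'' direction is immediate, since the quotient is then a polynomial. For the converse, any local representation $X_f(\sigma_j)/\sigma_j=a/b$ with $a,b$ polynomial and $b(p)\neq 0$ yields $b\cdot X_f(\sigma_j)=a\cdot\sigma_j$; as $\sigma_j$ is prime in $\mathbb{C}[x_1,x_2]$ and cannot divide $b$ (otherwise $b(p)=0$), it must divide $X_f(\sigma_j)$. This prime-ideal argument is the main technical ingredient of the proof, and it has the pleasant feature of sidestepping any smoothness assumption on $Y_j$: regularity at a \emph{single} point of $Y_j\setminus\bigcup_{i\neq j}Y_i$ already forces global divisibility.

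Finally, I would verify that $\sigma_j\mid X_f(\sigma_j)$ is equivalent to invariance of $Y_j$ under \eqref{ode}. The implication ``$\Rightarrow$'' is Remark \ref{invcrit} applied to the single generator $\sigma_j$, producing a polynomial $\nu$ with $X_f(\sigma_j)=\nu\sigma_j$. For ``$\Leftarrow$'', invariance forces $\sigma_j(F(t,x))\equiv 0$ whenever $x\in Y_j$, so $X_f(\sigma_j)$ vanishes identically on $Y_j$; the Hilbert Nullstellensatz then gives $X_f(\sigma_j)\in I(Y_j)=\sqrt{(\sigma_j)}=(\sigma_j)$, the last equality because $\sigma_j$ is irreducible in the UFD $\mathbb{C}[x_1,x_2]$ so that $(\sigma_j)$ is already prime. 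Chaining the three equivalences proves the proposition.
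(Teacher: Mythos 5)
Your proof is correct and follows essentially the same route as the paper: reduce regularity of $\beta$ to regularity of $X_f(\sigma_j)/\sigma_j$ via the logarithmic-derivative decomposition of $X_f(\theta)/\theta$, and link this to invariance of $Y_j$ through the divisibility condition $\sigma_j\mid X_f(\sigma_j)$. You merely supply in full the details (the prime-ideal argument showing that regularity at one point forces polynomiality, and the Nullstellensatz equivalence between invariance and divisibility) that the paper states briefly or delegates to the reference \cite{WPre}.
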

\begin{proof} The zero set of $\theta$ is the union of the zero sets of the $\sigma_i$, all of which are non-empty due to the Hilbert {\em Nullstellensatz}. Also by virtue of the {\em Nullstellensatz}, $Y_j$ will be invariant if and only if $\sigma_j$ divides $X_f(\sigma_j)$ (see e.g. \cite{WPre} for a proof). This proves one direction of the equivalence. For the reverse direction assume that $\beta$ is regular at some $z\in Y_j\setminus \bigcup_{i\not= j}Y_i $ and use 
\[
X_f(\theta)/\theta=\sum m_i\,X_f(\sigma_i)/\sigma_i
\]
to see that $X_f(\sigma_j)/\sigma_j$ must be regular in $z$. This forces $X_f(\sigma_j)/\sigma_j$ to be polynomial. 

\end{proof}
\begin{corollary} \label{twodimcor} Let the situation and notation of Proposition \ref{twodimside} be given. Then the following are equivalent:\\
{\em (i)} The vector fields $f$ and $g$ are in involution on $U$; i.e., identity \eqref{twodiminv} holds with polynomial functions $\alpha$ and $\beta$ on $U$.\\
{\em(ii)} The zero set of $\theta$ is invariant for both $f$ and $g$.
\end{corollary}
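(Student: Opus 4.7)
The plan is to reduce the corollary to Proposition \ref{twodimside} via a symmetry between $\alpha$ and $\beta$. By Lemma \ref{twodimred}, on the dense open set $\widetilde U$ where $\theta \neq 0$ the coefficients $\alpha$ and $\beta$ appearing in \eqref{twodiminv} are the unique rational functions given by \eqref{twodcoeffs}; any polynomial $\alpha,\beta$ on $U$ realizing the involution relation must therefore restrict to these rational functions on $\widetilde U$, so condition (i) is equivalent to the assertion that both rational functions $\alpha = X_g(\theta)/\theta - \mathrm{div}\,g$ and $\beta = -(X_f(\theta)/\theta - \mathrm{div}\,f)$ are polynomial on $\mathbb C^2$. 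Since $\mathrm{div}\,f$ and $\mathrm{div}\,g$ are polynomial, this in turn is equivalent to $X_f(\theta)/\theta$ and $X_g(\theta)/\theta$ being polynomial.

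First I would argue ``$\beta$ polynomial $\Leftrightarrow$ $Z(\theta)$ invariant for $f$.'' One direction is straightforward: if $\beta$ is polynomial then it is regular at every point, and in particular at some point of each $Y_j \setminus \bigcup_{i\neq j} Y_i$, so by Proposition \ref{twodimside} every $Y_j$ is $f$-invariant and hence $Z(\theta) = \bigcup_j Y_j$ is $f$-invariant. For the converse, invariance of each $Y_j$ under $f$ means (by the \emph{Nullstellensatz}, as used in the proof of Proposition \ref{twodimside}) that $\sigma_j$ divides $X_f(\sigma_j)$, so each quotient $X_f(\sigma_j)/\sigma_j$ is polynomial; from
\[
\frac{X_f(\theta)}{\theta} \;=\; \sum_{i=1}^{r} m_i\,\frac{X_f(\sigma_i)}{\sigma_i}
\]
one concludes that $X_f(\theta)/\theta$, and therefore $\beta$, is polynomial.

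An analogous argument, obtained by swapping the roles of $f$ and $g$ (which replaces $\theta$ by $-\theta$ and so preserves the prime factorization up to sign, leaving $Z(\theta)$ and the divisibility criterion unchanged), yields ``$\alpha$ polynomial $\Leftrightarrow$ $Z(\theta)$ invariant for $g$.'' Conjoining these two equivalences gives (i) $\Leftrightarrow$ (ii).

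I do not anticipate any serious obstacle; the corollary is essentially a bookkeeping combination of the $f$-statement of Proposition \ref{twodimside} with its $g$-analogue. The only subtle point worth spelling out is the uniqueness argument guaranteeing that the polynomial $\alpha,\beta$ in condition (i) must coincide with the rational expressions from \eqref{twodcoeffs}; this relies on the linear independence of $f$ and $g$ over the field of rational functions, which is immediate from $\theta \not\equiv 0$.
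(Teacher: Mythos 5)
Your argument is correct and is essentially the intended derivation: the paper states this corollary without a separate proof, and the natural route is exactly yours --- reduce (i) to polynomiality of the rational coefficients $\alpha,\beta$ from \eqref{twodcoeffs} via their uniqueness on $\widetilde U$, then apply Proposition \ref{twodimside} to $f$ and, with the roles of $f$ and $g$ swapped (so $\theta\mapsto-\theta$), to $g$. The only step worth making explicit is that invariance of the union $Z(\theta)=\bigcup_j Y_j$ implies invariance of each component $Y_j$ (needed for (ii)$\Rightarrow$(i)); this follows from the \emph{Nullstellensatz} criterion, since $\sigma_1\cdots\sigma_r$ dividing $X_f(\sigma_1\cdots\sigma_r)$ forces $\sigma_j\mid X_f(\sigma_j)$ for each $j$ by pairwise coprimality.
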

\begin{remark}{\em
On the other hand, one may obtain every algebraic invariant set of a polynomial equation from ``partial involution" with some polynomial vector field, in the following sense. Let a polynomial system \eqref{ode} be given on $\mathbb C^2$ and let $\sigma$ be a polynomial such that its zero set $Y$ is invariant, but $\sigma$ is not a first integral of $f$.  (Thus $\sigma$ is a proper conditional invariant, or semi-invariant, of \eqref{ode}.) Choose the Hamiltonian vector field
\[
g:= h_\sigma=\left(\begin{array}{r}-\partial\sigma/\partial x_2\\
                                                      \partial\sigma/\partial x_1\end{array}\right),
\]
then the function $\beta$ in relation \eqref{twodiminv} is regular on a Zariski-open subset of $Y$. \\
To see this, recall that there is a nonzero polynomial $\lambda$ such that $X_f(\sigma)=\lambda\sigma$, due to invariance and the {\em Nullstellensatz}. By construction 
\[
\theta=\det(f,\,g)=X_f(\sigma)=\lambda \sigma\,;\quad X_f(\theta)/\theta = X_f(\lambda)/\lambda+\lambda.
\]
\rightline {$\diamond$}
}

\end{remark}

The results above can be easily transferred to the local analytic setting, with analogous proofs. Further extension to the global analytic case (on an open and connected set $U$) requires a restatement in weaker form, since the Noetherian and unique factorization properties will  be lost. For the smooth case one essentially obtains results about invariant local submanifolds, but one may have to deal with degenerate cases such as $\theta$ being identically zero on some open subset. 

\section{An application to reaction equations}
In this final section we show that side conditions appear naturally in the context of some applied problems, viz., for quasi-steady state (QSS) in chemistry and biochemistry. Side conditions are a mathematical incarnation of quasi-steady state assumptions for chemical species, and provide a computational approach to the detection of parameter regions where QSS phenomena arise.
\subsection{Background and motivation}
For some chemical reaction equations, in particular in biochemistry, one is interested in conditions that cause certain components of a solution to change slowly when compared to the overall rate of change. One speaks of quasi-steady state (resp.~a quasi-steady state assumption) in this case; see Atkins and de Paula \cite{ap}, p.~812 ff. on the chemistry background. Typically, the differential equation modelling the reaction depends on parameters (such as initial concentrations and rate constants), and one asks for conditions on these parameters which guarantee quasi-steady state. To illustrate the concept we consider the Michaelis-Menten system (for details see Segel and Slemrod \cite{SSl}).  In this fundamental model for an enzyme-catalyzed reaction, enzyme (E) and substrate (S) combine reversibly to a complex (C) which in turn degrades to enzyme and product (P). Symbolically we have
\[
E+S\rightleftharpoons C\rightharpoonup E+P.
\]
Denoting the concentrations by the corresponding lower-case letters, mass action kinetics and stoichiometry lead to the system
\begin{equation}\label{mimeirr}
\begin{array}{clccl}
\dot{s}&=-&k_1e_0s&+&(k_1s+k_{-1})c\\
\dot{c}&= &k_1e_0s&-&(k_1s+k_{-1}+k_2)c\\
\end{array}
\end{equation}
with relevant initial values $s(0)=s_0>0,\,c(0)=0$, and nonnegative rate constants $k_1$, $k_{-1}$ and $k_2$. Particular interest lies in QSS for the complex concentration $c$. The standard translation of QSS to mathematical terms works via interpretation as a singular perturbation problem; see Segel and Slemrod \cite{SSl} for a thorough discussion, and also the overview in \cite{gw}, subsection 8.2. In the present paper we will pursue a more general and at the same time more straightforward approach. This essentially goes back to Heinrich and Schauer \cite{hs-inv}, and is based on a different aspect, viz.~the existence of certain invariant sets. \\
For motivation, note that QSS for $c$ should imply 
\[
\left(\dot c=\right)\,k_1e_0s-(k_1s+k_{-1}+k_2)c\approx 0.
\] 
In practice, the stronger assumption 
\[
\phi(s,\,c):=k_1e_0s-(k_1s+k_{-1}+k_2)c= 0
\]
is used to express $c$ as a function of $s$, obtaining (upon substitution in the first equation) a one-dimensional differential equation for $s$. From a mathematical perspective this procedure is consistent only if the zero set of $\phi$ is actually invariant for \eqref{mimeirr}.
Heinrich and Schauer \cite{hs-inv} relaxed the invariance requirement by stipulating that the actual trajectory remain close to the zero set of $\phi$ (for the time period of interest). The Heinrich-Schauer condition (which was put in broader context in \cite{nw09}) involves rather intricate estimates and therefore is cumbersome to implement, but the sharper invariance requirement naturally leads to LaSalle type side conditions, and a computationally feasible approach.
Thus we augment the reasoning in \cite{hs-inv} with the following observation: Assume that for some parameters
\[
p^*:=(e_0^*,\,k_1^*,\,k_{-1}^*,\,k_2^*)\in \mathbb R^4_+
\]
the zero set of $\phi$ is actually invariant for system \eqref{mimeirr}. Then it is reasonable to suspect that the Heinrich-Schauer condition should be satisfied for small perturbations of this parameter set, and we will verify this in the next subsection.

\subsection{Side conditions for parameter-dependent systems}
First we need to specify the somewhat loose statement just given.
Thus we present and prove a general formulation which is applicable beyond the QSS scenario. Consider a parameter-dependent system
\begin{equation}\label{odep}
\dot x = f(x;p),\quad (x,p)\in \mathbb R^n\times \mathbb R^m
\end{equation}
with the right-hand side smooth on an open subset $D$ of $\mathbb R^n\times \mathbb R^m$. (Less restrictive asumptions would suffice for the purpose of this subsection.) We denote the local flow by $F(t,y;\,p)$.
Moreover consider smooth functions
\begin{equation}
\phi_1,\,\ldots,\phi_r:\, D\to \mathbb R.
\end{equation}
In practice, these functions may come from chemical intuition, or from educated guesses (such as QSS assumptions for certain chemical species), and the common zero set of these functions is conjectured to be close to an invariant set. The following proposition yields criteria to verify such a conjecture.
\begin{proposition}\label{asnear}
 Let $p^*\in\mathbb R^m$ such that the equations
\[
\phi_1(x,\,p^*)=\cdots =\phi_r(x,\,p^*)=0
\]
define a local $s$-dimensional submanifold $Y_{p^*}$ of $\mathbb R^n$ which is invariant for the system \eqref{odep}. Moreover let $y^*\in Y_{p^*}$ and assume that the Jacobian for suitable $n-s$ functions among $\phi_1,\ldots,\phi_r$ has rank $n-s$ at $y^*$. Then the following hold.\\
{\em (a)} There is a compact neighborhood $K$ of $y^*$ and a neighborhood $V$ of $p^*$ such that $Y_{p^*}\cap K$ is compact, and that for every $p\in V$ the set defined
by the equations
\[
\phi_1(x,\,p)=\cdots =\phi_r(x,\,p)=0
\]
contains an $s$-dimensional local submanifold $Y_p$ which has nonempty compact intersection with $K$. Furthermore, for every $\varepsilon>0$ there is a $\delta>0$ such that
\[
{\rm dist}\,(x,\,Y_{p^*}):=\inf_{z\in Y_{p^*}}\Vert x-z\Vert<\varepsilon \text{ for all } x\in Y_p \text{  whenever  }  \Vert p-p^*\Vert<\delta.
\]
{\em (b)} Let $T>0$ such that $F(t,y;p^*)$ exists on the interval $[0,\,T]$ for all $y\in Y_{p^*}\cap K$. Then for each $\rho>0$ there exists $\theta>0$ with the following property: For every $p\in V$ with $\Vert p-p^*\Vert<\theta$ and every $z\in Y_p$ the solution $F(t,z;p)$ exists on the interval $[0,\,T]$, and 
\[
{\rm dist}\,(F(t,z;p),\,Y_{p})<\rho \text{ for all } t\in [0,T].
\]
{\em (c)} Given $p$ sufficiently close to $p^*$, assume (with no loss of generality) that $x_{1},\ldots, x_{{s}}$ are local coordinates on $Y_p$, and 
\[
x_k=\eta_k(x_1,\ldots,x_{s};\,p),\quad s+1\leq k\leq n, \text{     on  }Y_p\cap K.
\]
Then the solution of 
\[
\dot x_i=f_i(x_1,\ldots,x_{s},\eta_{s+1},\ldots,\eta_{n};\,p),\quad 1\leq i\leq s,
\]
combined with $x_k=\eta_k(x_1,\ldots,x_{s};\,p)$ for $k>s$, converges on $[0,\,T]$ to the solution of \eqref{odep} as $p\to p^*$.
\end{proposition}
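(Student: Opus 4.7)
The plan is to dispatch the three parts in order, using (a) the implicit function theorem with parameters, (b) continuous dependence of flows on initial conditions and parameters combined with (a), and (c) the observation that at $p=p^*$ the reduced system coincides with the restriction of \eqref{odep} to the invariant submanifold $Y_{p^*}$. The main difficulty is bookkeeping rather than depth: one must keep all estimates uniform on the compact set $K$ and on the whole interval $[0,T]$, while simultaneously controlling $p$, the graphing functions, and the flow.

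For part (a), the rank hypothesis at $y^*$ persists in a neighborhood by continuity of the partial derivatives of the $\phi_j$. Selecting $n-s$ of the $\phi_j$ whose Jacobian at $y^*$ has full rank $n-s$ and relabeling coordinates so that $\partial(\phi_{i_1},\ldots,\phi_{i_{n-s}})/\partial(x_{s+1},\ldots,x_n)$ is invertible at $(y^*,p^*)$, the implicit function theorem with parameters locally solves these equations for $x_{s+1},\ldots,x_n$ as smooth functions $\eta_{s+1},\ldots,\eta_n$ of $(x_1,\ldots,x_s;p)$; the remaining $\phi_j$ vanish on the resulting set automatically, after possibly shrinking the neighborhood. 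Taking $K$ to be a compact product neighborhood in the new coordinates and $V$ small enough, $Y_p\cap K$ is a smooth graph, hence compact, and smoothness of the $\eta_k$ in $p$ yields ${\rm dist}(x,Y_{p^*})\to 0$ uniformly on $Y_p\cap K$ as $p\to p^*$.

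For part (b), fix a compact set $K'$ containing the set of orbits $\{F(t,y;p^*):t\in[0,T],\,y\in Y_{p^*}\cap K\}$ in its interior; this is possible because the orbit set is compact, and standard continuous dependence gives that $F$ is then defined on $[0,T]\times K''\times V'$ for some compact $K''\supset K'$ and some neighborhood $V'$ of $p^*$. For $z\in Y_p\cap K$, choose $y\in Y_{p^*}\cap K$ within the Hausdorff distance between $Y_p$ and $Y_{p^*}$ (small by part (a)); continuous dependence then yields $\|F(t,z;p)-F(t,y;p^*)\|<\rho/2$ uniformly on $[0,T]$ for $p$ close to $p^*$. Since $F(t,y;p^*)\in Y_{p^*}$ by invariance and $Y_{p^*}$ lies within $\rho/2$ of $Y_p$ (by part (a), for $p$ sufficiently close to $p^*$), the triangle inequality yields ${\rm dist}(F(t,z;p),Y_p)<\rho$.

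For part (c), write the reduced equation as $\dot x_i=\widetilde f_i(x_1,\ldots,x_s;p):=f_i(x_1,\ldots,x_s,\eta_{s+1},\ldots,\eta_n;p)$; by part (a) the right-hand side is smooth in $p$ near $p^*$. At $p=p^*$, invariance of $Y_{p^*}$ under \eqref{odep} forces the identity $f_k(x;p^*)=\sum_{i=1}^s(\partial\eta_k/\partial x_i)\,\widetilde f_i(x_1,\ldots,x_s;p^*)$ for $k>s$ at points of $Y_{p^*}\cap K$, so that the curve obtained from the reduced system at $p^*$ and graphed via the $\eta_k(\cdot;p^*)$ is a genuine solution of \eqref{odep} at $p^*$. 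Continuous dependence of ODE solutions on parameters applied to the reduced system then yields convergence on $[0,T]$ of the graphed reduced curve at $p$ to this solution of \eqref{odep} as $p\to p^*$.
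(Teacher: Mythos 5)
Your proposal is correct and follows essentially the same route as the paper, whose entire proof is the single sentence that (a) is a consequence of the implicit function theorem and a compactness argument while (b) and (c) follow from (a) and standard dependence theorems; you have simply supplied the details of exactly that argument. (The one step to watch is your claim in (a) that the remaining $\phi_j$ vanish automatically on the implicit-function-theorem graph for $p\neq p^*$ --- this is clear at $p=p^*$ but in general requires $r=n-s$ or some further hypothesis; the paper glosses over the same point.)
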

\begin{proof} Part (a) is a consequence of the implicit function theorem and a compactness argument, while parts (b) and (c) follow from (a) and standard dependence theorems. 

\end{proof}
\begin{remark}{\em 
(a)  A more comprehensive generalization of Heinrich and Schauer's concept \cite{hs-inv}, called {\em near-invariance}, was introduced and discussed in \cite{nw09}. One consequence of Proposition \ref{asnear} is that for every $\sigma>0$ there exists $\eta>0$ such that $Y_p\cap K$ is $\sigma$-nearly invariant whenever $\Vert p-p^*\Vert<\eta$. (The stronger property that $\sigma$ may be chosen arbitrarily small is not required in the more general notion from \cite{nw09}.) As shown by the examples in \cite{nw09}, finding (sharp) estimates for near-invariance may be quite involved.\\
(b) One may encounter the degenerate case that $Y_{p^*}$ consists of stationary points only. Then the statement of Proposition \ref{asnear} is correct but not particularly strong. On the other hand, this degenerate scenario is actually one prerequisite for application of the classical singular perturbation results by Tikhonov \cite{tikh} and Fenichel \cite{fenichel}; see \cite{gw}, Thm.~8.1.
If the additional hypotheses for Tikhonov's theorem are fulfilled then one obtains a sharper result (on the slow time scale) in lieu of the proposition above. There exists a more systematic (and more intricate) approach to finding ``small parameters'' for singular perturbation scenarios (see the recent dissertation \cite{godiss} by A.~Goeke and also \cite{GWZ}), but side conditions still provide an easy-to-use tool for detection.
}
\rightline {$\diamond$}
\end{remark}
\subsection{Some applications}
We consider two famous reaction equations which have been extensively discussed in the literature. 
\subsubsection{Michaelis-Menten}
The Michaelis-Menten system is probably the most famous among the systems exhibiting QSS. We will abbreviate \eqref{mimeirr} as $\dot x =f(x,\,p)$, with $x=(s,c)$. Three types of QSS assumption have been discussed in the literature:
\begin{itemize}
\item QSS for complex: $\phi=\psi_1:= X_f(c)=k_1e_0s-(k_1s+k_{-1}+k_2)c$ (also known as {\em standard QSS}).
\item QSS for substrate: $\phi= \psi_2:=X_f(s)=-k_1e_0s+(k_1s+k_{-1})c$. This is also known as {\em reverse QSS}; see Segel and Slemrod \cite{SSl}.
\item QSS for total substrate: $\phi=\psi_3:= X_f(s+c)=- k_2c$. This is also known as {\em total QSS}; see Borghans et al. \cite{bbs}.
\end{itemize}
We determine parameter combinations which yield invariance, and thus allow the application of Proposition \ref{asnear}.
\begin{proposition} Consider the Michaelis-Menten system with nonnegative parameters $e_0$ and $k_i$. Then:
\begin{itemize}
\item The submanifold defined by $c=0$ is invariant for system \eqref{mimeirr} if and only if $e_0=0$ or $k_1=0$.
\item The zero set of $\psi_1$ contains a one-dimensional invariant submanifold of system \eqref{mimeirr} if and only if any one of the following holds:\\
(i) $e_0=0$; (ii) $k_1=0$;  (iii) $k_2=0$.\\
In the first two cases, the invariant manifold is given by $c=0$; in the third case it has the representation $c=k_1e_0s/(k_1s+k_{-1})$ (in particular $c=e_0$ if $k_{-1}=0$). In all cases the invariant manifold consists of stationary points only.
\item The zero set of $\psi_2$ contains a one-dimensional invariant submanifold of system \eqref{mimeirr} which is not among those previously discussed if and only if $k_{-1}=0$. In this case the manifold is given by $s=0$.
\item All the one-dimensional invariant manifolds contained in the zero set of $\psi_3$ are among those of the standard QSS case.
\end{itemize}
\end{proposition}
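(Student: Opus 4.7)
The strategy is to apply the invariance criterion of Remark \ref{invcrit}: a component of the zero set of a smooth $\phi$ is invariant for \eqref{mimeirr} iff $X_f(\phi)$ vanishes on it. Since all relevant $\phi$ are low-degree polynomials in $(s,c)$, I will compute $X_f(\phi)$, reduce modulo $\phi$, and examine when the remainder vanishes on (a component of) $\{\phi=0\}$. The identity $\dot s+\dot c=-k_2c$ will be used throughout.

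For Item 1: $X_f(c)|_{c=0}=k_1e_0\,s$, which vanishes identically on $\{c=0\}$ iff $k_1e_0=0$. For Item 2, on $\{\psi_1=0\}$ one has $\dot c=\psi_1=0$ and hence $\dot s=-k_2c$, so
\[
X_f(\psi_1)=k_1(e_0-c)\,\dot s-(k_1s+k_{-1}+k_2)\,\dot c
\]
restricts to $-k_1k_2\,c(e_0-c)$. Since $\psi_1$ is irreducible in $\mathbb K[s,c]$ for generic parameters while this remainder is $s$-free, it can vanish on the entire curve $\{\psi_1=0\}$ only when $k_1=0$, $k_2=0$, or $e_0=0$; the remaining degenerate parameter configurations are checked separately. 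Solving $\psi_1=0$ in each subcase yields the stated manifold, and a direct evaluation of $\dot s$ and $\dot c$ shows that every point is stationary (in the $k_2=0$ case the cancellation $\dot s=-k_1e_0 s+(k_1s+k_{-1})\cdot\tfrac{k_1e_0s}{k_1s+k_{-1}}=0$ does the work).

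For Item 3 the analogous reduction on $\{\psi_2=0\}$ (where $\dot s=0$ and $\dot c=-k_2c$) gives $X_f(\psi_2)|_{\psi_2=0}=-k_2\,c(k_1s+k_{-1})$. The subcase $k_2=0$ coincides with the $k_2=0$ branch of Item 2, since then $\psi_1=\psi_2$; a component on which $c\equiv0$ reverts to Item 1; the only genuinely new possibility is that $s$ divides $\psi_2=k_1sc-k_1e_0s+k_{-1}c$, which occurs exactly when $k_{-1}=0$. Then $\psi_2=k_1s(c-e_0)$, and $\{s=0\}$ is invariant by $\dot s|_{s=0}=k_{-1}c=0$. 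Item 4 is immediate: $\psi_3=-k_2c$, whose zero set is $\{c=0\}$ (Item 1) or all of $\mathbb R^2$ when $k_2=0$ (the $k_2=0$ subcase of Item 2).

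The main bookkeeping difficulty is Item 3, where the reducibility of $\psi_2$ and the coincidence $\psi_1=\psi_2$ at $k_2=0$ require a careful cross-check against Items 1 and 2 to isolate the genuinely new invariant submanifold $\{s=0\}$ that emerges precisely when $k_{-1}=0$.
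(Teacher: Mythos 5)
Your proof is correct and follows essentially the same route as the paper's: the same reductions $X_f(\psi_1)\equiv -k_1k_2\,c(e_0-c)$ and $X_f(\psi_2)\equiv -k_2\,c(k_1s+k_{-1})$ modulo the respective side conditions (obtained via $X_f(s)+X_f(c)=-k_2c$), followed by the same case analysis of which lines can carry a one-dimensional invariant set. The only cosmetic differences are your irreducibility framing in Item 2 (the paper argues directly that the invariant set must lie in $\{c=0\}$ or $\{c=e_0\}$ and then tests those lines) and your shortcut in Item 3 from the factor $k_1s+k_{-1}$ to ``$s$ divides $\psi_2$'', which glosses over the line $k_1s+k_{-1}=0$ for $k_{-1}\neq 0$ in exactly the way the paper's own proof does.
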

\begin{proof} Since 
\[
X_f(c)=k_1e_0s-\left(\cdots\right)\cdot c
\]
(with $(\cdots)$ standing for some polynomial whose explicit form is of no relevance here), the common zero set of $c$ and $X_f(c)$ contains just the point $0$ whenver $k_1e_0\not=0$. On the other hand, $k_1e_0=0$ implies invariance by Remark \ref{invcrit}. This proves the first assertion. For the following we note that $X_f(s)+X_f(c)=-k_2c$.\\
A straightforward computation shows
\[
\begin{array}{rcl}
X_f(\psi_1)=X_f^2(c)&=&k_1(e_0-c)X_f(s)-(k_1s+k_{-1}+k_2)X_f(c)\\
                                                  &=& -k_1(e_0-c)\cdot k_2c+\left(\cdots\right)\cdot\psi_1.
\end{array}
\]
Thus the zero set $Y$ of $\psi_1$ is invariant, by Remark \ref{invcrit}, in case $k_1=0$ or $k_2=0$. If $k_1k_2\not=0$ then $Y$ must either contain the zero set of $c$ as an invariant set (which was discussed above), or the zero set of $e_0-c$. Since
\[
X_f(e_0-c)=\left(\cdots\right)\cdot(e_0-c)+ (k_{-1}+k_2)\cdot c
\]
and $k_2>0$, common zeros of $e_0-c$ and $X_f(e_0-c)$ exist only when $c=0$.\\
A similar computation yields
\[
X_f(\psi_2)=\left(\cdots\right)\cdot \psi_2-(k_1s+k_{-1})\cdot k_2c.
\]
Assume that $k_1\not=0$. Then $k_{-1}=0$ implies $\psi_2=-k_1(e_0-c)s$, and from previous arguments it is known that invariance of the line $e_0-c=0$ implies $e_0=0$. Thus only the case $s=0$ yields a new invariant set. Moreover, the set defined by $k_1s+k_{-1}=0$ is invariant only when $k_{-1}=0$, in view of $X_f(s)=-k_1e_0 s+(k_1s+k_{-1})\cdot(\cdots)$.\\
The final assertion follows directly from previous arguments.
\end{proof}
By Proposition \ref{asnear}, for small $e_0$ one will have an invariant manifold close to $c=0$, for small $k_2$ one will have an invariant manifold close to the curve $c=k_1e_0s/(k_1s+k_{-1})$ , and so on.
We provide more details for two cases, with the results stated somewhat informally.
\begin{itemize}
\item For sufficiently small $e_0$ (the other parameters being fixed and $>0$), solutions starting close to the set defined by $\psi_1=0$, i.e.
\[
c=\frac{k_1e_0s}{k_1s+k_{-1}+k_2}
\]
will remain close for an extended duration of time, and the solution will be close to a solution of the familiar reduced equation
\[
\dot s = \frac{-k_1k_2e_0 s}{k_1s+k_{-1}+k_2}.
\]
An analysis via singular perturbation theory yields the same reduced equation; see Segel and Slemrod \cite{SSl}. But one should emphasize that the two procedures generally lead to reduced systems which are different, and the difference is of the same order in the small parameter as the systems themselves. For Michaelis-Menten this phenomenon occurs for small parameter $k_2$.
\item  For sufficiently small $k_{-1}$ (the other parameters being fixed and $>0$), solutions starting in the set defined by $\psi_2=0$, i.e.
\[
s=\frac{k_{-1}c}{k_1(e_0-c)}
\]
will remain close to this set for an extended duration of time, and the solution will be close to a solution of the reduced equation
\[
\dot c=-k_2c.
\]
\end{itemize}
The second scenario does not represent a standard singular perturbation problem with small parameter $k_{-1}$, since the zero set of $\psi_2$ contains non-stationary points when $k_{-1}=0$. Thus  the method outlined in Proposition \ref{asnear} also yields (asymptotic) invariant sets that one cannot trace back to singular perturbation phenomena.
\subsubsection{Lindemann-Hinsley}
The Lindemann-Hinsley system
\begin{equation}\label{lihi}
\begin{array}{rcl}
\dot a&=&-k_1a^2+k_{-1}ab\\
\dot b&=&k_1a^2-k_{-1}ab-k_2b
\end{array}
\end{equation}
models a two-stage degradation process of a chemical species A, with activated stage B. More background and a phase plane analysis are given in Calder and Siegel \cite{cs}. Again the right-hand side will be abbreviated by $f(x,p)$, with obvious variables and parameters. We are interested in QSS for the concentration $b$ of activated molecules, thus we have
\[
\phi=X_f(b)=k_1a^2-k_{-1}ab-k_2b.
\]
\begin{proposition} The zero set of $\phi$ contains a one-dimensional invariant submanifold of system \eqref{lihi} (with nonnegative parameters) if and only if (i) $k_1=0$ or (ii) $k_2=0$. In the first case the invariant set is given by $b=0$. In the second  case there exist two invariant manifolds, given by $a=0$, resp. by $k_1a-k_{-1}b=0$. In any case the invariant sets are made up of stationary points only.

\end{proposition}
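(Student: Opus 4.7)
The plan is to apply Proposition \ref{algsi}: since $\phi = X_f(b)$ is itself of LaSalle type, the common zero set of $\phi$ and its iterates $X_f^k(\phi)$ must contain any invariant subset supported in $\{\phi=0\}$, and in the polynomial setting a single step typically suffices via Remark \ref{invcrit}. So my first step is to compute $X_f(\phi)$ and reduce it modulo $\phi$. Using $\dot a = -k_1 a^2 + k_{-1} ab = -(\phi + k_2 b)$ one finds, after collecting terms,
\[
X_f(\phi) \;=\; -k_2\,b\,(2k_1 a - k_{-1} b)\;+\;\nu(a,b)\,\phi
\]
for a polynomial $\nu$. By Remark \ref{invcrit}, an irreducible component of $\{\phi=0\}$ is invariant precisely when $k_2\,b\,(2k_1 a - k_{-1} b)$ vanishes on it.

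Next I would split into cases on the parameters. If $k_2=0$, the residue vanishes identically, so $\{\phi=0\}$ is invariant; here $\phi = a(k_1 a - k_{-1} b)$ factors, giving the two lines $a=0$ and $k_1 a - k_{-1} b = 0$. If $k_1=0$ (with $k_2 > 0$), then $\phi = -b(k_{-1}a + k_2)$; the factor $b$ clearly yields an invariant line $\{b=0\}$, while the other factor $k_{-1}a+k_2$ cannot generically cut out an invariant line since the residue $k_2 b(2k_1 a - k_{-1} b)\big|_{k_1=0} = -k_{-1}k_2 b^2$ does not vanish there unless $b=0$. The remaining case $k_1,k_2>0$ is the main case to rule out: on the zero set of $\phi$ the residue forces $b=0$ or $2k_1 a = k_{-1} b$; substituting each relation back into $\phi=0$ (a straightforward polynomial elimination, using $k_{-1}\ge 0$) leaves only isolated points, so no one-dimensional invariant submanifold arises.

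Finally I would verify the stationarity claim by direct substitution of $f$ on each of the three manifolds found: on $\{b=0\}$ with $k_1=0$ both components of $f$ vanish; on $\{a=0\}$ with $k_2=0$ both vanish; and on $\{k_1 a - k_{-1} b = 0\}$ with $k_2=0$ one rewrites $\dot a = -a(k_1 a - k_{-1} b)$ and $\dot b = a(k_1 a - k_{-1} b) - k_2 b$ to see both components vanish. This will also confirm the converse direction: the parameter conditions listed are both necessary (from the modular reduction above) and sufficient (from Remark \ref{invcrit} applied to the explicit factorizations).

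The main obstacle I anticipate is the algebra in the subcase $k_1, k_2 > 0$, where one must carefully enumerate branches of $\{\phi=0\}\cap\{b(2k_1 a - k_{-1} b)=0\}$ and check that every such branch collapses to points; care is needed because one must also handle the degenerate subcase $k_{-1}=0$ separately, where the second factor becomes $2k_1 a$ and a short additional computation shows $\{a=0\}\cap\{\phi=0\} = \{0\}$. Everything else is bookkeeping of polynomial relations and direct evaluation of $f$.
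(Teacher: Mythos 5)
Your proposal is correct and follows essentially the same route as the paper: compute $X_f(\phi)=-k_2b\,(2k_1a-k_{-1}b)+(\cdots)\cdot\phi$ and then case-split on the parameters, ruling out the case $k_1k_2>0$ because the zero set of $\phi$ meets the zero set of the residue only in finitely many points. Your version merely spells out the branch enumeration and the stationarity checks in more detail than the paper's terse argument.
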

\begin{proof} One finds
\[
X_f(\phi)=X_f^2(b)=-k_2b\cdot\left(2k_1a-k_{-1}b\right)+\left(\cdots\right)\cdot\phi.
\]
If $k_2=0$ then the remaining assertions are immediate. If $k_2\not=0$ then the zero set of $\phi$ must either contain the zero set of $b$, which forces $k_1=0$, or the set given by $2k_1a-k_{-1}b=0$. The latter leads to the contradiction $k_2=0$.
\end{proof}
By Proposition \ref{asnear} we see, for instance, that for $k_2\to 0$ (and the other parameters constants $>0$) any solution starting close to the line given by $k_1a-k_{-1}b=0$ will remain close for an extended duration of time, and the solution of \eqref{lihi} is approximated by the reduced equation
\[
\dot b = -k_2b.
\]
A singular perturbation analysis yields the same reduced equation with a stronger justification; see Calder and Siegel \cite{cs}, and Goeke \cite{godiss}.
\begin{remark}{\em 
The main purpose of this final section was to present a natural application of side conditions in a different -- and perhaps unexpected -- field, and to show by (simple but relevant) examples that the side condition approach provides a conceptually straightforward and computationally feasible way to determine QSS conditions for prescribed variables. Moreover, the usual types of reaction equations (polynomial, due to mass action kinetics) are accessible by methods of algorithmic algebra. This will be the subject of forthcoming work.}\\
\rightline {$\diamond$}
\end{remark}

\end{document}